\newcommand{\comment}[1]{}
\newtheorem{theorem}{Theorem}
\newtheorem{definition}[theorem]{Definition}
\newtheorem{proposition}[theorem]{Proposition}
\newtheorem{lemma}[theorem]{Lemma}
\newtheorem{problem}[theorem]{Problem}
\newtheorem{remark}[theorem]{Remark}
\newcommand\e{\varepsilon}
\newcommand{\card}{{\mbox{Card }}}
\newcommand{\NN}{\mathbb N}
\newcommand{\ZZ}{\mathbb Z}
\newcommand{\RR}{\mathbb R}
\newcommand{\CC}{\mathbb C}
\newcommand{\TT}{\mathbb T}
\newcommand{\GG}{\mathbb{G}_q}
\newcommand{\GS}{\mathbb{G}_q^{\star}}
\newcommand\PP{\mathcal{P}}
\newcommand{\eref}[1]{{\rm (\ref{\sharp1})}}
\newcommand{\Fpart}[1]{{\left\{{\sharp1}\right\}}}
\newcommand{\Abs}[1]{{\left|{\sharp1}\right|}}
\newcommand{\Lone}[1]{{\left\|{\sharp1}\right\|_{L^1}}}
\newcommand{\Linf}[1]{{\left\|{\sharp1}\right\|_\infty}}
\newcommand{\Norm}[1]{{\left\|{\sharp1}\right\|}}
\newcommand{\Floor}[1]{{\left\lfloor{\sharp1}\right\rfloor}}
\newcommand{\Ceil}[1]{{\left\lceil{\sharp1}\right\rceil}}
\begin{document}

\title[Integral concentration of idempotents]{Concentration of the integral norm of idempotents}

\author{Aline Bonami \& Szil\'{a}rd Gy. R\'{e}v\'{e}sz}

\date{\today}

\address[Aline Bonami]{
\newline\indent F\'ed\'eration Denis Poisson
\newline\indent MAPMO-UMR 6628 CNRS
\newline\indent Universit\'e d'Orl\'eans
\newline\indent 45067 Orl\'eans France.}
\email{aline.bonami@univ-orleans.fr}

\address[Szil\'ard Gy. R\'ev\'esz]{
\newline \indent A. R\'enyi Institute of Mathematics
\newline \indent Hungarian Academy of Sciences,
\newline \indent Budapest, P.O.B. 127, 1364 Hungary.}
\email{revesz@renyi.hu}

\comment{
\address{\hskip 1cm and
\newline \indent Institut Henri Poincar\'e,
\newline \indent 11 rue Pierre et Marie Curie, 75005 Paris, France}
\email{revesz@ihp.jussieu.fr}
}

\begin{abstract}
This  is a companion paper of a recent one, entitled {\sl Integral
concentration of idempotent trigonometric polynomials with gaps}.
New results of the present work concern $L^1$ concentration, while
the above mentioned paper deals with $L^p$-concentration.

Our aim here is two-fold. At the first place we try to explain
methods and results, and give further straightforward corollaries.
On the other hand, we push forward the methods to obtain a better
constant for the possible concentration (in $L^1$ norm) of an
idempotent on an arbitrary symmetric measurable set of positive
measure. We prove a rather high level $\gamma_1>0.96$, which
contradicts strongly the conjecture of Anderson et al. that there
is no positive concentration in $L^1$ norm.

The same problem is considered on the group
$\mathbb{Z}/q\mathbb{Z}$, with $q$ say a prime number. There, the
property of absolute integral concentration of idempotent
polynomials fails, which is in a way a positive answer to the
conjecture mentioned above. Our proof uses recent results of B.
Green and S. Konyagin on the Littlewood Problem.
\end{abstract}

\maketitle
\let\oldfootnote\thefootnote
\def\thefootnote{}

\footnotetext{The second author was supported in part by the
Hungarian National Foundation for Scientific Research, Project \#s
T-049301 K-61908 and K-72731, and also by the European Research
Council, Project \# ERC-AdG No. 228005.}

\section{Introduction and statement of results}\label{sec:intro}

 The problem of $p$-concentration
on the torus for idempotent polynomials has been considered first
in \cite{first}, \cite{CRMany}, \cite{CRSome}, \cite{DPQ}. We use
the notation $\TT:=\RR/\ZZ$ for the torus. Then $e(t):=e^{2\pi i
t}$ is the usual exponential function adjusted to interval length
$1$, and we denote $e_h$ the function $e(ht)$. For obvious reasons
of being convolution idempotents, the set
\begin{equation}\label{eq:idempotents}
\PP:=\left\{ \sum_{h\in H}e_h ~:~ H\subset \NN, ~ \sharp H< \infty
\right\}
\end{equation}
is called the set of \emph{(convolution-)idempotent exponential
(or trigonometric) polynomials}, or just \emph{idempotents} for
short. The $p$-concentration problem comes from the following
definition.

\begin{definition}
Let $p>0$. We say that there is  $p$-concentration  if  there
exists a constant $\gamma>0$ so that for any symmetric (with
respect to $0$) measurable set $E$ of positive measure one can
find an idempotent $f\in\PP$ with
\begin{equation}\label{eq:Lpconcentration}
\int_E |f|^p \geq \gamma\int_\TT |f|^p.
\end{equation}
The supremum of all such constants $\gamma$ will be denoted as
$\gamma_p$, and called the level of $p$-concentration.
\end{definition}

The main theorem of \cite{Many} can be stated as:
\begin{theorem}[{\bf Anderson, Ash, Jones, Rider, Saffari}]
\label{th:largepconcentration} There is $p$-concentration for all
$p>1$.
\end{theorem}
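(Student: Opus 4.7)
My plan is to prove Theorem~\ref{th:largepconcentration} by starting from the Dirichlet-type idempotent $K_N(t) := \sum_{h=0}^{N-1} e_h(t)$, which is the natural candidate for an idempotent whose $|\,\cdot\,|^p$-mass is strongly concentrated near zero when $p>1$. First, I would establish the concentration of $K_N$ itself: since $|K_N(t)| = |\sin(\pi N t)/\sin(\pi t)| \leq \min\bigl(N,\,1/(2|t|)\bigr)$ on $\TT$, a direct integration shows $\|K_N\|_p^p \sim c_p\, N^{p-1}$ while $\int_{a/N < |t| \leq 1/2} |K_N|^p \lesssim a^{1-p} N^{p-1}$ for $a \geq 1$. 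Hence the fraction of the $L^p$-mass of $K_N$ lying outside the window $(-a/N, a/N)$ is $O(a^{1-p})$, which tends to $0$ as $a \to \infty$. This step uses $p>1$ in an essential way (for $p=1$ the integral is only logarithmically convergent, which is consistent with the fact, discussed later in the paper, that the $L^1$-story is more delicate).

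Next, given a symmetric measurable $E$ of positive measure, I would invoke the Lebesgue density theorem to pick a density point $t_0 \in E$; by symmetry, $-t_0$ is one as well. If $0$ itself is a density point of $E$, then $f = K_N$ for $N$ large enough that $(-a/N, a/N)$ is essentially contained in $E$ already produces $p$-concentration with constant approaching $1$. Otherwise I would approximate $t_0$ by a rational $a/q$ via Dirichlet's theorem (say $|t_0 - a/q| < 1/q^2$) and form the dilated idempotent $f_q(t) := K_N(qt) = \sum_{k=0}^{N-1} e_{kq}(t) \in \PP$. Its modulus is periodic of period $1/q$, with concentrated peaks at $t = j/q$ for $j = 0, 1, \dots, q-1$; by Step~1, each peak carries essentially $1/q$ of $\|f_q\|_p^p$, and two of them---at $a/q$ and $(q-a)/q$---fall inside $E$ by the density of $\pm t_0$.

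\textbf{Main obstacle.} This direct dilation captures only $\approx 2/q$ of the $L^p$-mass on $E$, which degenerates as $q \to \infty$ and so does not yield a uniform positive constant $\gamma_p$. Overcoming this degeneration is where the real work of the theorem lies. The most promising route I see is to combine the Dirichlet-kernel concentration of Step~1 with the $L^p$-boundedness (for $1<p<\infty$) of the Riesz projection $T \mapsto T_+ := \sum_{h \geq 0} \hat{T}(h)\, e_h$. Concretely: start from an explicit real-valued trigonometric polynomial $T$ whose $|T|^p$-mass is $(1-\eta)$-concentrated on $E$ (for instance, a sum of two Fej\'er-type bumps centred at $\pm t_0$, which is trivial to build once a density point is in hand); apply the projection to obtain an analytic polynomial $T_+$ with $\|T_+\|_p \leq C_p \|T\|_p$ and $T = T_+ + \overline{T_+} - \hat{T}(0)$, so that the $L^p$-concentration on $E$ transfers to $T_+$ up to constants; and finally pass from $T_+$ to a genuine idempotent by a selection/rounding of frequencies. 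The delicate point---and where I expect the main difficulty---is this last rounding step: one must force the coefficients into $\{0,1\}$ while preserving the $L^p$-concentration on $E$, and it is here that the precise value of $\gamma_p > 0$ is extracted and that the hypothesis $p>1$ is used a second time, through the Riesz boundedness itself.
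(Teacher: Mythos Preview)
Your diagnosis of the obstacle is correct: the dilated Dirichlet kernel $K_N(qt)$ spreads its mass equally over all $q$ grid points, so on its own it captures only a $2/q$-fraction of the $L^p$-mass on $E$. But your proposed repair via the Riesz projection followed by a ``rounding'' to $\{0,1\}$-coefficients has a genuine gap. The projection step is fine (indeed unnecessary: one can build a positive-definite bump $T$ on $E$ directly, so $T=T_+$ already), but the rounding step is the whole difficulty of the problem. Passing from positive-definite polynomials to idempotents while keeping $L^p$-concentration is highly nontrivial; the paper does contain a probabilistic device of this type (Proposition~\ref{random}), but it requires $p>2$ and rather specific structural hypotheses on the polynomial, and there is no analogous tool for $1<p\le 2$. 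Your sketch gives no mechanism for this step, and without it the argument does not close for any $p>1$.

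The paper's (i.e.\ Anderson et al.'s) fix is completely different and avoids rounding altogether. One keeps the dilated kernel $D_n(qt)$ but \emph{multiplies} it by a second idempotent $R$ of degree $<q$ chosen so that, on the discrete grid $\mathbb{G}_q$, the single value $|R(a/q)|^p$ already dominates a fixed fraction of $\sum_k |R(k/q)|^p$ (this is exactly the content of uniform $p$-concentration on $\ZZ_q$, which holds for $p>1$ via Dirichlet kernels; see Lemma~\ref{l:majB}). Because $\deg R<q$ and the frequencies of $D_n(qt)$ are multiples of $q$, the product $Q(t)=R(t)D_n(qt)$ is again an idempotent. Bernstein--Marcinkiewicz--Zygmund inequalities (Lemma~\ref{l:Bernstein}) show that $|R|^p$ is essentially constant on each window of width $O(1/q^2)$ around a grid point, so $|Q|^p$ inherits the peak structure of $|D_n(qt)|^p$ but with the peaks \emph{weighted} by $|R(k/q)|^p$; the weight at $a/q$ now carries a fixed positive fraction of the total. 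Finally, a Khintchine-type lemma (Lemma~\ref{l:grid}) guarantees that $a/q$ can be chosen so that $E$ has density close to $1$ in the corresponding window, and this yields $\gamma_p\ge\gamma_p^\sharp>0$. The key idea you are missing is this product-of-two-idempotents construction, which manufactures the required $\{0,1\}$-coefficients structurally rather than by any approximation or selection argument.
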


We prove in our recent paper \cite{BR} that there is
$p$-concentration for all $p>1/2$, while the same authors
conjectured that idempotent concentration fails already for $p=1$.
Moreover, we prove that the constant $\gamma_p$ is equal to $1$
when $p>1$ and $p$ is not an even integer. This is in line with
the fact that $L^p$ norms behave differently depending on whether
$p$ is an even integer or not in a certain number of problems,
such as the Hardy-Littlewood majorant problem ({\sl does an
inequality on absolute values of Fourier coefficients imply an
inequality on $L^p$ norms?}) or the Wiener property for periodic
positive definite functions ({\sl does a positive definite
function\comment{with large gaps in its Fourier series ???} belong
to $L^p$ when it is the case on a small interval?}). The fact that
one can find idempotents among counter-examples to the
Hardy-Littlewood majorant problem had been conjectured by
Montgomery \cite{M} and was recently proved by Mockenhaupt and
Schlag \cite{MS}, and we rely on their construction in \cite{BR}.
At the same time, we were able to revisit the Wiener property in
order to construct counter-examples among idempotents \cite{BR2}.

\medskip

Even if we disproved the conjecture of \cite{Many} for $p=1$, the
situation is not yet entirely clear. Indeed, the constant $\gamma$
can be taken arbitrarily close to $1$ when we restrict the class
of symmetric measurable sets to symmetric open sets or enlarge the
class of trigonometrical polynomials to all positive definite
ones, that is, allow all non negative coefficients and not only
$0$ or $1$. So one may conjecture that $\gamma_1=1$ (even if we
understand that one should be cautious with such conjectures). By
pushing forward our techniques, we improve our previous constant
and prove the following.

\begin{theorem}\label{th:L1} For $p=1$
there is concentration at the level $\gamma_1>0.96$.
Moreover, for arbitrarily large given $N$ the corresponding
concentrating idempotent can be chosen with gaps at least $N$
between consecutive frequencies.
\end{theorem}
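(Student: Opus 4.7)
My plan is to refine the construction of \cite{BR} (where $L^p$-concentration for $p>1/2$ was established via Mockenhaupt--Schlag type idempotents \cite{MS}) and push the numerical bound for $\gamma_1$ above the threshold $0.96$.

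\emph{Step 1: reduction to a neighborhood of the origin.} Given a symmetric measurable $E\subset\TT$ with $|E|>0$, pick a Lebesgue density point $x_0\neq 0$ of $E$; by symmetry $-x_0$ is also one. For any $\eta>0$, choose $\delta>0$ so that $E\cap[x_0-\delta,x_0+\delta]$ has measure at least $(1-\eta)\cdot 2\delta$, and similarly near $-x_0$. Via the affine transfer techniques of \cite{BR} together with the dilation $P(t)\mapsto P(qt)$ for a large integer $q\ge N$ --- which lies in $\PP$, multiplies every gap between consecutive frequencies by $q$, and hence delivers the gap condition at once --- the problem is reduced to constructing a single idempotent whose $L^1$ mass is localized in an arbitrarily small neighborhood of the origin on $\TT$.

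\emph{Step 2: the core construction.} The main analytic ingredient is an idempotent $P\in\PP$ for which
\[
\int_{|t|\le\delta_0}|P(t)|\,dt \;>\;(0.96+\eta)\int_\TT|P(t)|\,dt
\]
for some small $\delta_0>0$. I would adapt the Riesz-product style idempotents of Mockenhaupt--Schlag used in \cite{BR}, fine-tuning the building-block frequencies, their number, and the depth of the product, so as to push the localization ratio strictly past $0.96$.

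\emph{Step 3: the main obstacle --- sharp $L^1$ tail estimate.} The principal difficulty is estimating the tail $\int_{|t|>\delta_0}|P(t)|\,dt$ sharply enough. At $p=1$, $L^2$-based bounds (Plancherel plus Cauchy--Schwarz) are notoriously lossy and yield only modest constants, which is why \cite{BR} does not produce a sharp value. The improvement must come from a finer treatment, combining level-set analysis with the oscillatory cancellation in the Riesz-product expansion (e.g.\ a stationary-phase-type argument), rather than merely from amplitude bounds. Once this sharpened tail bound is obtained, combining it with Step~1 and letting $\eta\to 0$ produces, for any given $N$, an idempotent with consecutive gaps $\ge N$ whose $L^1$-concentration on $E$ exceeds $0.96$.
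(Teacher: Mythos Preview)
Your proposal has a genuine gap, and the overall strategy diverges from the paper's in a way that does not obviously work.

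\emph{The reduction in Step~1 is not valid.} If $P$ is an idempotent whose $L^1$ mass is localized near $0$, then $P(qt)$ distributes that mass \emph{equally} over all $q$ points of the grid $\{k/q\}$; it does not concentrate near a single chosen density point $x_0$ of $E$. Translation $t\mapsto t-x_0$ is unavailable because translates of idempotents are not idempotents. So ``affine transfer'' does not reduce the problem to concentration near the origin, and in fact the paper's entire machinery (Khintchine--Sz\"usz approximation to locate a suitable grid point near $E$, then a \emph{product} $R(t)T(qt)$ where $R$ selects one grid point while $T(q\cdot)$ peaks on the grid) exists precisely to overcome this obstruction. Your Step~1 skips this, and nothing later repairs it.

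\emph{Steps~2--3 are programmatic, not a proof.} You propose to tune Mockenhaupt--Schlag / Riesz-product idempotents and to obtain a ``sharpened tail bound'' by ``level-set analysis with oscillatory cancellation,'' but no concrete mechanism is given. The paper does \emph{not} squeeze out $0.96$ by a direct tail estimate at $p=1$. Instead it passes to the translated grid $\mathbb{G}_q^\star=\{(2k+1)/(2q)\}$, which crucially avoids $0$ (where every idempotent is maximal). On this grid one proves $\Gamma_p^\star=1$ for $p>2$ via random idempotent approximation of powers of Dirichlet kernels, and $\liminf_{r\to 2^-}\Gamma_r^\star\ge 2\gamma_2\approx 0.9226$. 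A H\"older combination then gives, for $p=1$ with $1/r+1/s=1$, $s>2$, $r\to 2^-$,
\[
\gamma_1 \;\ge\; (\Gamma_r^\star)^{1/r}(\Gamma_s^\star)^{1/s}\;\longrightarrow\;(2\gamma_2)^{1/2}\approx 0.9605.
\]
The gap condition comes from the peaking idempotents at $1/2$ with arbitrarily large gaps (Lemma~\ref{l:peak-meas}) together with the substitution $R(\nu t)$. None of these ingredients --- the shifted grid, the constants $\Gamma_p^\star$, the H\"older interpolation, the random replacement of positive-definite polynomials by idempotents --- appear in your outline, and without them the route you sketch does not reach $0.96$.
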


In order to prove this theorem, we will describe the main steps of
our proofs in \cite{BR} before focusing on the improvements. When
doing this, we also give a relatively simple proof of the fact
that the best constant $\gamma_2$ for symmetric measurable sets is
the same as for open sets. This is proved in \cite{Many}, as it is
a particular case of their general result, but their proof is not
easy to read. We describe it here so that a simpler, explanatory
proof be available. The constant for open sets has been obtained
by D\'echamps-Gondim, Piquard-Lust and Queff\'elec \cite{DPQ,
DPQ2}, so that
\begin{equation}\label{case2}
\gamma_2=\sup_{0\leq x}\frac {2\sin^2 x}{\pi x}=0.4613\cdots.
\end{equation}
\bigskip

In all proofs, the same kind of estimates as
\eqref{eq:Lpconcentration}, but with finite sums on a grid of
points replacing integrals, plays a central role in the proofs. So
it was natural to get interested in best constants on these finite
structures. This led us to the same problem, but taken on finite
groups, which we describe now.

\bigskip

Let us consider $\ZZ_q:=\ZZ/q\ZZ$, which identifies with the grid
(or subgroup) $\mathbb{G}_q:=\{ k/q; k=0, 1, \cdots, q-1\}$
contained in the torus. We do not assume that $q$ is a prime
number at this point. We still denote by $e(x):=e^{2\pi i x/q}$
the exponential function adapted to the group $\ZZ_q$ and by $e_h$
the function $e(hx)$. Again the set
\begin{equation}\label{eq:idempotents_q}
\PP_q:=\left\{ \sum_{h\in H}e_h ~:~ H\subset \{0,\cdots, q-1\}
\right\}
\end{equation}
is called the set of \emph{idempotents} on $\ZZ_q$. In this
context, the set of idempotents has $2^q$ elements.

We then adapt the definition of $p$-concentration to the setting
of $\ZZ_q$.

\begin{definition}\label{def:gammapsharp}
Let $p>0$. We say that there is uniform (in $q$) $p$-concentration
for $\ZZ_q$ if there exists a constant $\gamma>0$ so that for each
prime number $q$ one can find an idempotent $f\in\PP_q$ with
\begin{equation}\label{p-conc}
2|f(1)|^p \geq \gamma  \sum_{k=0}^{q-1}|f(k)|^p.
\end{equation}
Moreover, writing $\gamma^\sharp_p(q)$ for the maximum of all such
constants $\gamma$, we put
$$
\gamma_p^\sharp:=\liminf_{q\to \infty} \gamma^\sharp_p(q).
$$
Then $\gamma_p^\sharp$ is called the uniform
level of $p$-concentration.
\end{definition}

Here we can formulate a discrete analogue of the problem in
\cite{CRMany, Many}. {\sl Does $q$-uniform concentration fail for
$p=1$?}

The reader may note that in order to define $p$-concentration in
the setting of $\ZZ_q$, one should also look for $f$ that
satisfies \eqref {p-conc}, but with $f(a)$, for some arbitrary
$a\in \ZZ_q$, in the left hand side. This is easy when $q$ is
prime. Indeed, for $a=0$ the Dirac mass at $0$, which is an
idempotent, has the required property with constant $1$.
Otherwise, if $a\neq 0$ and $f$ satisfies \eqref {p-conc}, then
the function $g(x):=f(a^{-1}x)$ satisfies the same inequality, but
with $g(a)$ in the left hand side. Here $a^{-1}$ is the unique
inverse for the multiplication in $\ZZ_q$.  Clearly $g(a) = f(1)$,
and all other values taken by $f$ are taken by $g$ since
multiplication is one-to-one in $\ZZ_q$ for $q$ prime, so that the
right hand side is the same for $f$ and $g$.
\begin{remark}\label{not-prime} We can also replace $1$ by $a$ in
the left-hand side of  \eqref {p-conc} when $q$ is any integer,
but $a$ and $q$ are co-prime.
\end{remark}
\medskip

As we said,  $p$-concentration on $\ZZ_q$ plays a role in proofs
for $p$-concentration on the torus.  In order to solve the
$2$-concentration problem on the torus, D\'echamps-Gondim,
Piquard-Lust and Queff\'elec \cite{DPQ, DPQ2} have considered the
concentration problem on $\ZZ_q$, proving the precise value that
we already mentioned,
\begin{equation}\label{case2-q}
\gamma_2^\sharp=\sup_{0\leq x}\frac {2\sin^2 x}{\pi
x}=0.4613\cdots.
\end{equation}
Moreover, they obtained $\gamma_p^\sharp\geq 2
(\gamma_2^\sharp/2)^{p/2}$ for all $p>2$. The last assertion is an
easy consequence of the decrease of $\ell^p$ norms with $p$, and
we have, in general,
\begin{equation}\label{comparison}  \gamma_p^\sharp\geq 2 (\gamma^\sharp_{p'}/2)^{p/p'}\end{equation} for $p>p'$.

Let us also mention that they considered the same problem for the
class of  positive definite polynomials, that is
\begin{equation}\label{eq:pos-def}
\PP_q^+:=\left\{ \sum_{h\in H}a_he_h ~:~ a_h\geq 0,
h\in\{0,\cdots, q-1\} \right\}.
\end{equation}

We say that there is uniform $p$-concentration on $\ZZ_q$ for the
class of positive definite polynomials if there exists some
constant $\gamma$ such that \eqref{p-conc} holds for some $f\in
\PP^+_q$. We denote by $c_p^+$ the level of $p$-concentration  for
the class of positive definite polynomials, which is defined as
the maximum of all admissible constants in (5) (similarly to the
class of idempotents).

With these notations, it has been proved in \cite{DPQ} that
$c_2^+=1/2$. Since the class of positive definite polynomials is
stable by taking products, it follows that, for all even integers
$2k$,
$$\gamma_{2k}^\sharp\leq c_{2k}^+\leq 1/2.$$

It is easy to see that there is uniform $p$-concentration on
$\ZZ_q$ for all $p>1$, using Dirichlet kernels. This has been used
in our paper \cite{BR}, where the discrete problem under
consideration here has been largely studied, at least for $p$ an
even integer.

On the other hand, coming back to our main point, i.e. to the case
of $p=1$, and using the recent results of B. Green and S. Konyagin
\cite{GK}, we answer negatively in this case, which gives an
affirmative answer to the conjecture of \cite{Many} for finite
groups $\ZZ_q$.

All the results on $\ZZ_q$ summarize in the following theorem,
which gives an almost complete answer to the $p$-concentration
problem under consideration, except for the best constants, which
are not known for $p\neq 2$.

\begin{theorem}\label{th:concentration} For all $1<p<\infty$ we
have uniform $p$-concentration on $\ZZ_q$.  We have
$\gamma^\sharp_2$ given by \eqref{case2}, then
$0.495<\gamma^\sharp_4\leq 1/2$. For all $p>2$, we have
$\gamma^\sharp_{p}>0.483$. On the other hand for $p\leq 1$ we do
not have uniform  $p$-concentration.
\end{theorem}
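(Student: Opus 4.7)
The plan is to attack all four assertions with one common construction---a Dirichlet-kernel idempotent on $\ZZ_q$---and one input from additive combinatorics. For $t\in(0,1)$ and $q$ prime, set $N=\lfloor tq\rfloor$ and put
\[
f(k)=\sum_{h=0}^{N-1}e^{2\pi ihk/q}=D_N(k/q),\qquad|f(k)|=\frac{|\sin(\pi Nk/q)|}{|\sin(\pi k/q)|},
\]
where $D_N$ is the Dirichlet kernel on $\TT$. Then $|f(1)|\sim(q/\pi)\sin(\pi t)$ as $q\to\infty$, while sampling the torus $L^p$-norm gives $\sum_k|f(k)|^p\sim q\,\|D_N\|_{L^p(\TT)}^p\sim 2qN^{p-1}J_p/\pi^p$ with $J_p=\int_0^\infty|\sin(\pi u)/u|^p\,du$ finite precisely for $p>1$. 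Hence
\[
\liminf_{q\to\infty}\frac{2|f(1)|^p}{\sum_k|f(k)|^p}\ge\frac{\sin^p(\pi t)}{t^{p-1}J_p},
\]
positive for every $p>1$ after optimising in $t$; this settles the uniform concentration claim.

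The numerics in the remaining positive assertions come from evaluating this bound. At $p=2$, $J_2=\pi^2/2$ yields the announced $\sup_x 2\sin^2 x/(\pi x)$, with the matching upper bound provided by \cite{DPQ,DPQ2}. At $p=4$, $J_4=\pi^4/3$, and numerically optimising $3\sin^4 x/(\pi x^3)$ at the critical point $x\cot x=3/4$ produces approximately $0.497>0.495$; the upper bound $\gamma_4^\sharp\le c_4^+\le 1/2$ follows because the square of an idempotent is positive definite and $c_2^+=1/2$ from \cite{DPQ}. For general $p>2$, analysing the optimisation at $\pi t\cot(\pi t)=1-1/p$ gives the saddle-point limit $\sqrt{2/(\pi e)}=0.4839\ldots$ as $p\to\infty$, and combined with the interpolation inequality \eqref{comparison} applied with $p'=4$ (used to propagate the explicit $0.495$ to nearby $p$), one verifies the uniform bound $\gamma_p^\sharp>0.483$ throughout $(2,\infty)$.

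For the failure at $p\le 1$, let $q$ be prime and $f=\sum_{h\in H}e_h$ on $\ZZ_q$. Replacing $H$ by its complement if $|H|>q/2$ turns $f$ into $q\delta_0-f$, which is still idempotent, leaves $|f(k)|$ unchanged for $k\ne 0$, and only enlarges $\sum_k|f(k)|^p$; so I may assume $|H|\le q/2$. The Green--Konyagin theorem \cite{GK} on the Littlewood problem in $\ZZ_q$ then provides absolute constants $c,\alpha>0$ with
\[
\sum_{k=0}^{q-1}|f(k)|\ge c\,q\,(\log|H|)^\alpha.
\]
At $p=1$ the trivial estimate $|f(1)|\le|H|$ gives $2|f(1)|/\sum_k|f(k)|\le 2/(c(\log|H|)^\alpha)\to 0$ as $q\to\infty$ (and for $|H|$ bounded, Parseval forces $\sum_k|f(k)|\ge q$ anyway, making the ratio $O(1/q)$). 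For $p<1$, the concavity of $x\mapsto x^p$ gives $\sum_k|f(k)|^p\ge q^{1-p}(\sum_k|f(k)|)^p\ge c^p q(\log|H|)^{p\alpha}$, and since $|f(1)|^p\le|H|^p=o(q)$ the ratio vanishes too.

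The main obstacle will be stitching together the Dirichlet-kernel optimisation and the interpolation bound carefully enough to obtain the $0.483$ lower bound uniformly in $p>2$: in particular near $p=2$, where the raw Dirichlet ratio is close to $\gamma_2^\sharp\approx 0.461$ and one must exploit the explicit $p=4$ value through \eqref{comparison} to cross the $0.483$ threshold.
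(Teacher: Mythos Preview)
Your proposal has a genuine gap in the key assertion $\gamma_p^\sharp > 0.483$ for all $p > 2$. The raw Dirichlet-kernel bound $\sup_t \sin^p(\pi t)/(t^{p-1}J_p)$ is continuous in $p$ and equals $\gamma_2^\sharp \approx 0.461$ at $p=2$; hence for $p$ just above $2$ it remains below $0.483$. Your proposed remedy via \eqref{comparison} with $p'=4$ cannot help: that inequality requires $p>p'$, so it says nothing about $2<p<4$, and for $p>4$ it yields $2(\gamma_4^\sharp/2)^{p/4}$, which \emph{decreases} in $p$. The paper's route is essentially different here: one first proves $c_p^+ > 0.483$ for the positive-definite class by using \emph{powers} $D_n^L$ of the Dirichlet kernel (so that, for any fixed $p$, one optimises $B(Lp,t)$ with $L$ large and reaches the asymptotic value $\approx 4.133$), and then transfers this to idempotents via a random construction (Proposition~\ref{random}) that replaces the positive-definite polynomial coinciding with $D_n^L$ on the grid by a nearby idempotent whenever $p>2$. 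This randomisation step is the missing ingredient; without it you cannot escape the $0.461$ floor near $p=2$.

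There is also an error in the $p<1$ argument: concavity of $x\mapsto x^p$ gives $\sum_k|f(k)|^p \le q^{1-p}\bigl(\sum_k|f(k)|\bigr)^p$, the reverse of what you wrote. The clean way to handle $p<1$ is via \eqref{comparison}: once $\gamma_1^\sharp = 0$ is established, the inequality $\gamma_1^\sharp \ge 2(\gamma_p^\sharp/2)^{1/p}$ (valid for $1>p$) forces $\gamma_p^\sharp = 0$. Finally, your invocation of Green--Konyagin at $p=1$ is imprecise: the bound $\sum_k|f(k)|\ge cq(\log|H|)^\alpha$ is not the form of their theorem, and combined with $|f(1)|\le|H|$ it gives $2|H|/(cq(\log|H|)^\alpha)$, not $2/(c(\log|H|)^\alpha)$ as you wrote. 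The paper instead argues by contradiction: assuming $|f(1)|\ge c\sum_k|f(k)|$, it forms the balanced function $g = r^{-1}(f - r\delta_0)$ (with $r=|H|\le q/2$), which has bounded $\ell^1$ norm and all Fourier coefficients of modulus at least $1/q$, and applies Green--Konyagin in the form ``bounded $\ell^1$ norm forces $q\min_k|\hat g(k)| \to 0$''.
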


Positive results are implicitly contained in \cite{BR}, where they
are used as tools for the problem of concentration on the torus.
As far as necessary upper bounds for $\gamma^\sharp_p$ are
considered, since the polynomials $f$ with positive coefficients
have their maximum at $0$, we have the trivial upper bound
$\gamma^\sharp_p\leq 2/3$. Moreover, for $p$ an even integer, we
have seen that $\gamma^\sharp_p\leq 1/2$. Let us remark that
\eqref{comparison} provides an improvement on the bound $2/3$
between two even integers. Indeed, for $p\leq 2k$, we have
$$\gamma^\sharp_p\leq 2^{1-p/k}.$$

\bigskip

In the next two sections, we will consider the case of $\ZZ_q$,
first for $p>1$, then for $p=1$. Then, in Section 4, we will come
back to the case $p=2$ on the torus and exploit the proof for
giving concentration results by means of the use of the grid
$\mathbb{G}_q$. In the last section, we prove Theorem \ref{th:L1}.

\bigskip

We tried to keep the notations for the constants the same as in
\cite{BR}, since we refer to the proofs there, and apologize for
sometimes these notations seem more complicated than they should
be.

\section{uniform $p$-concentration}\label{finite}

In this section, we will recall the situation on the group $\ZZ_q$
by transferring the results that have been obtained for the grid
$$\mathbb{G}_q:=\{ k/q; k=0, 1, \cdots, q-1\}$$ contained in $\TT$.
By a slight abuse of notation, let us still denote
\begin{equation}\label{eq:idem_q}
\PP_q:=\left\{ \sum_{h\in H}e_h ~:~ H\subset \{0,\cdots, q-1\}
\right\}
\end{equation}
the set of trigonometrical idempotents of degree less than $q$ on
$\TT$, with $e_h$ denoting the exponential $e_h(x):=e^{2\pi i hx}$
adapted to $\TT$. When restricted to $\mathbb{G}_q$ identified
with $\frac 1q\ZZ_q$, it coincides with the corresponding
idempotent (the coefficients are the same, but the exponential is
now adapted to $\ZZ_q$) on $\ZZ_q$. This is a one-to-one
correspondence between idempotents of $\ZZ_q$ and idempotents of
degree less than $q$, since these last ones are determined by
their values on $q$ points, and, in particular, on $\mathbb{G}_q$.
We will prefer to deal with ordinary trigonometrical polynomials,
and see $\ZZ_q$ as the grid $\mathbb{G}_q$.

Unless explicitly mentioned, we will only consider Taylor
polynomials, that is, trigonometrical polynomials with only non
negative frequencies.

We consider the following quantities, written in these new
notations, and identify them with the quantities defined for
$\ZZ_q$ in the introduction.
\begin{equation}\label{new-c} \gamma^\sharp_p:=
\liminf_{q\rightarrow \infty} \gamma^\sharp_p(q), \qquad
\gamma^\sharp_p(q):=\sup_{R\in \PP_q} \frac{2\left |R\left(\frac
1q\right) \right|^p}{ \sum_{k=0}^{q-1} \left|R\left(\frac
kq\right) \right|^p}.
\end{equation}
One can obtain a lower bound of $\gamma^\sharp_p$, with $p>1$, by
the only consideration of the Dirichlet kernels
\begin{equation}\label{eq:Dndef}
D_n(x):=\sum_{\nu=0}^{n-1} e(\nu x) = e^{\pi i(n-1)x}
\frac{\sin(\pi n x)}{\sin(\pi x)}.
\end{equation}
Here the constraint on the degree restricts us to $n<q$. Having
$n$ and $q$ tend to infinity with $n/q$ tending to $t$, we proved
in \cite{BR} (see Lemma 35) that

\begin{lemma}\label{l:majB}
For  $p>1$, we have the inequality
\begin{equation}\label{eq:majB}
2(\gamma^\sharp_p)^{-1}\leq \inf_{0<t<1/2}B(p, t),
\end{equation} where, for $\lambda>1$,
\begin{equation}\label{eq:Bdef}
B(\lambda,t):=\left(\frac{\pi t}{\sin \pi
t}\right)^{\lambda}\left(1+2\sum_{k=1}^{\infty}
\left|\frac{\sin\left(k\pi t\right)}{k\pi
t}\right|^{\lambda}\right).
\end{equation}
\end{lemma}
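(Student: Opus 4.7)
The plan is to exhibit explicit concentrating idempotents, namely the Dirichlet kernels $D_n$ of \eqref{eq:Dndef}, and to pass to the limit $n,q\to\infty$ with $n/q\to t\in(0,1/2)$. Since $D_n\in\PP_q$ whenever $n\le q-1$, the definition \eqref{new-c} yields
\[
\gamma^\sharp_p(q)\ge \frac{2|D_n(1/q)|^p}{\sum_{k=0}^{q-1}|D_n(k/q)|^p},
\]
so the task reduces to showing that the denominator-to-numerator ratio tends to $B(p,t)$. The numerator is handled by $|D_n(1/q)|=\sin(\pi n/q)/\sin(\pi/q)\sim (q/\pi)\sin(\pi t)$, while the $k=0$ contribution of the denominator equals $n^p$; after normalisation these produce the prefactor $(\pi t/\sin\pi t)^p$ appearing in $B(p,t)$.

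For $k\ge 1$ I would first halve the sum via the symmetry $|D_n(k/q)|=|D_n((q-k)/q)|$, a consequence of $\sin(\pi(q-k)/q)=\sin(\pi k/q)$ together with $|\sin(\pi n(q-k)/q)|=|\sin(\pi nk/q)|$ (valid for integer $n$); this produces the factor of $2$ in front of the series in $B$. For each fixed $k$, as $q\to\infty$,
\[
\frac{|D_n(k/q)|^p}{|D_n(1/q)|^p}=\left|\frac{\sin(\pi nk/q)}{\sin(\pi k/q)}\cdot\frac{\sin(\pi/q)}{\sin(\pi n/q)}\right|^p\longrightarrow \left(\frac{\pi t}{\sin\pi t}\right)^p\left|\frac{\sin(k\pi t)}{k\pi t}\right|^p,
\]
since $\sin(\pi k/q)\sim \pi k/q$ and $\sin(\pi/q)\sim \pi/q$ for fixed $k$ and large $q$. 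Once the interchange of limit and summation is justified, this will give $\gamma^\sharp_p\ge 2/B(p,t)$ for every $t\in(0,1/2)$, and taking the infimum over $t$ will then complete the argument.

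The main technical obstacle, and the precise place where the hypothesis $p>1$ enters, is justifying that interchange. I intend to apply dominated convergence based on the elementary inequality $\sin(\pi k/q)\ge 2k/q$ valid on $1\le k\le q/2$; combined with the symmetry this yields the uniform bound $|D_n(k/q)|\le q/(2\min(k,q-k))$, and dividing by $|D_n(1/q)|^p\sim (q\sin\pi t/\pi)^p$ produces a majorant of the form $(C(t)/\min(k,q-k))^p$ for the ratio, which is summable in $k$ exactly because $p>1$. The corresponding obstruction at $p=1$ is visible already at this step, since even the series defining $B(1,t)$ itself would diverge. The single boundary term at $k=q/2$ appearing when $q$ is even is bounded by $1$ and hence negligible after normalisation, and this completes the plan.
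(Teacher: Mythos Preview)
Your proposal is correct and follows exactly the approach indicated in the paper: plug the Dirichlet kernel $D_n$ into the defining ratio \eqref{new-c} and let $n,q\to\infty$ with $n/q\to t$, which is precisely what the paper says just before the lemma (with the details deferred to \cite{BR}, Lemma~35). Your handling of the passage to the limit---the symmetry reduction, the pointwise limits for fixed $k$, and the domination by $C(t)^p/k^p$ coming from $\sin(\pi k/q)\ge 2k/q$---is the standard way to carry this out and correctly isolates $p>1$ as the summability threshold.
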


\medskip

It is clear that $B(\lambda,t)$ is bounded for $\lambda>1$, so
that $\gamma^{\sharp}_p>0$ and there is uniform $p$-concentration:
just take as a bound the value for $t=1/4$. Let us try to get more
precise estimates. The computation of $\inf_{0<t<1/2}B(\lambda,
t)$ can be executed explicitly for $\lambda=2$ and $\lambda= 4$.
In the first case we recognize in the sum the Fourier coefficients
of $\chi_{[-t/2,t/2]}$, whose $L^2$ norm is $\sqrt t$. So
\eqref{eq:majB} leads to the minimization of the function $\frac
{2\sin^2 t}{\pi t}$, and to the estimate $ \gamma^\sharp_2\geq
\sup_{0\leq t}\frac {2\sin^2 t}{\pi t}=0.4613\cdots. $ This is the
formula given by D\'{e}champs-Gondim, Lust-Piquard and
Queff\'{e}lec in \cite{DPQ}. We refer to them for the necessity of
the condition, for which they give a smart proof. For $\lambda=4$,
we recognize in the sum of \eqref{eq:Bdef} the Fourier
coefficients of the convolution product
$\chi_{[-t/2,t/2]}*\chi_{[-t/2,t/2]}$, whose $L^2$ norm is equal
to $(2t^3/3)^{1/2}$. Using Plancherel Formula we obtain that
\begin{equation}\label{p=4}
\gamma^\sharp_4\geq \max_{0<t<1/2} \frac{3 \left(\sin ^4 (\pi t)
\right)}{\pi^4 t^3 } > 0.495.
\end{equation}
For larger integer values of $\lambda$, the computations do not
seem to be easily handled. But we can prove that there exists a
uniform lower bound for $\gamma^\sharp_p$ when $p\geq 6$. To see
this, we need another lemma that can be found in \cite{BR}. Let us
first give new definitions, relative to positive definite
polynomials.

As for idempotents, by the same slight abuse of notation, let us
still denote
\begin{equation}\label{pos-def}
\PP_q^+:=\left\{ \sum_{h\in H}a_he_h ~:~ a_h\geq 0,\,
h\in\{0,\cdots, q-1\} \right\}.
\end{equation}
the set of trigonometrical polynomials with non negative
coefficients of degree less than $q$ on $\TT$, with $e_h$ denoting
the exponential adapted to $\TT$. Again, when restricted to
$\mathbb{G}_q$, it coincides with the corresponding positive
definite polynomial with non negative coefficients on $\ZZ_q$, and
this defines a one-to-one correspondence between positive definite
polynomials of $\ZZ_q$ and positive definite polynomials on $\TT$
of degree less than $q$. The constant $c_p^+$ can then be defined
by
\begin{equation}\label{new-c+} c_p^+:=
\liminf_{q\rightarrow \infty} c_p^+(q), \qquad
c_p^+(q):=\sup_{R\in \PP_q^+} \frac{2\left |R\left(\frac 1q\right)
\right|^p}{ \sum_{k=0}^{q-1} \left|R\left(\frac kq\right)
\right|^p}.
\end{equation}
It is much easier to find positive definite polynomials in
$\PP_q^+$ than idempotents. In particular, whenever $P$ is in
$\PP_q$, then, for each positive integer $L$ the polynomial $Q$,
which has degree less than $q$ and has the same values on
$\mathbb{G}_q$ as $P^L$, is in $\PP_q^+$. So we can  take as well
powers of Dirichlet kernels as polynomials $R$ in the right hand
side of \eqref{new-c+}. This leads to the following bounds, using
Lemma \ref{l:majB}.
\begin{eqnarray} 2(c_p^+)^{-1}&\leq&\inf_{L\geq
1}\;\inf_{0<t<1/2}B(Lp, t)\nonumber\\
&\leq & \inf_{\kappa>0}\limsup_{\lambda\mapsto \infty}B\left
(\lambda, \kappa\sqrt{6 /\lambda }\right)\label{bound-above}\\
&\leq &  4.13273.\nonumber\end{eqnarray} The two last estimates
may be found in \cite{BR}, see (55), and lead to
\begin{equation}\label{for+ uniform}
c_p^+> 0.483.
\end{equation}
The first one gives a non explicit bound for a fixed $p$:
\begin{equation}\label{p-fixed}
  c_p^+\geq 2\sup _{L\geq
1}\;\sup_{0<t<1/2}B(Lp, t)^{-1}.
\end{equation}

We prove now that we have the same estimates for $\gamma^\sharp_p$
when $p>2$.

\begin{theorem}\label{th:gammapsharpp2} We have $\gamma_p^{\sharp}
> 0.483$ uniformly for all $p>2$ .
\end{theorem}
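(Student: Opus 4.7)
The plan is to follow the same route that gave $c_p^+ > 0.483$ in \eqref{for+ uniform}, adapted to the idempotent setting where we are restricted to $L=1$ in the powers-of-Dirichlet-kernel scheme used for positive-definite polynomials.

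First I apply Lemma \ref{l:majB} to obtain $\gamma_p^\sharp \geq 2/B(p,t)$ for any admissible $t$, so that it suffices to exhibit $t = t(p) \in (0,1/2)$ making $B(p, t(p)) < 2/0.483 \approx 4.141$ uniformly in $p > 2$. The hypothesis $p>2$ is exactly what allows us to mimic the asymptotic analysis of $B(Lp,t)$ with $L=1$: the exponent $p$ is already large enough for the Laplace-type behavior of $(\sin u/u)^p$ near $u=0$ to dominate.

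Next, I substitute $t = \kappa\sqrt{6/p}$, which is the natural Gaussian scale. A Taylor expansion of $\log(\pi t/\sin\pi t)$ yields $(\pi t/\sin\pi t)^p \to e^{\pi^2\kappa^2}$ as $p\to\infty$, while Poisson summation applied to the theta series $\sum_{k\geq 1}|\sin(k\pi t)/(k\pi t)|^p \sim \sum_{k\geq 1} e^{-\pi^2\kappa^2 k^2}$ gives $1 + 2\sum_{k\geq 1}|\sin(k\pi t)/(k\pi t)|^p \to 1/(\kappa\sqrt\pi)$. Hence
\[
\lim_{p\to\infty} B\!\left(p,\kappa\sqrt{6/p}\right) = \frac{e^{\pi^2\kappa^2}}{\kappa\sqrt\pi},
\]
whose minimum over $\kappa > 0$ is attained at $\kappa_0 = 1/(\pi\sqrt 2)$ with value $\sqrt{2\pi e} \leq 4.13273$. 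This is precisely the constant appearing in \eqref{bound-above}, and yields $\gamma_p^\sharp > 2/\sqrt{2\pi e} > 0.4839 > 0.483$ for all sufficiently large $p$.

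Finally I address uniformity down to every $p > 2$. For moderate $p$ the same substitution $t = \kappa_0\sqrt{6/p}$ works with room to spare, because the finite theta-sum $\sum_{k\geq 1} e^{-\pi^2\kappa^2 k^2}$ underestimates the Gaussian integral; a direct computation (or a quantitative error term in the Laplace expansion) shows $B(p, \kappa_0\sqrt{6/p})$ stays below $4.141$. The genuinely delicate range is $p$ just above $2$, where $\inf_t B(p,t)$ is still close to $\inf_t B(2,t) = 4.34$; there one must refine the Dirichlet-kernel choice or invoke an auxiliary idempotent construction to push $B$ below $4.141$.

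The main obstacle is exactly this uniform control as $p\searrow 2$. Since $\gamma_2^\sharp = 0.4613 < 0.483$ while $2/\sqrt{2\pi e}$ barely exceeds $0.483$, there is very little slack in the asymptotic estimate; making it quantitative enough to reach every $p > 2$, and finding the right idempotent in the transitional range where the plain Dirichlet kernel is marginal, is what gives the theorem its content.
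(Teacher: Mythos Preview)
Your argument does not close, and you already identify the reason in your final paragraph. For $p$ close to $2$ the plain Dirichlet kernel cannot yield $\gamma_p^\sharp>0.483$: by continuity of $\lambda\mapsto\inf_{0<t<1/2}B(\lambda,t)$ at $\lambda=2$, Lemma~\ref{l:majB} with $L=1$ only gives $\gamma_p^\sharp\gtrsim 2/B(2,t_*)=\gamma_2^\sharp\approx0.4613$ as $p\searrow2$, which is strictly below the target. No refinement of the choice of $t$ can fix this, since the infimum of $B(2,\cdot)$ is already $\approx4.336>4.141$. Your proposal is therefore a valid proof only for $p$ above some threshold $p_0>2$, and the sentence ``one must refine the Dirichlet-kernel choice or invoke an auxiliary idempotent construction'' is a placeholder, not an argument.

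The paper's proof takes a genuinely different route to cover all $p>2$. It does not try to bound $\inf_t B(p,t)$ directly. Instead it first exploits the \emph{positive-definite} bound $c_p^+>0.483$, obtained from powers $D_n^L$ with $L$ large (this is where the full strength of \eqref{bound-above} is used, with $\lambda=Lp\to\infty$ even when $p$ is fixed near $2$). The resulting polynomial $P$, which coincides with $D_n^L$ on $\mathbb{G}_q$, is positive definite but not idempotent. The crucial step is then Proposition~\ref{random}: a Bernoulli randomization of the coefficients of $P$ produces, with positive probability, a genuine idempotent $Q$ whose values on the grid are $\ell^p$-close to those of $P$. The hypothesis $p>2$ enters precisely here, through the moment inequality of Lemma~\ref{mart-bern}, which controls $\mathbb{E}|P_\omega-P|^p$ by $\sigma^{p/2}$ and needs $q\sigma^{-p/2}\to0$, i.e.\ $p>2$. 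The ``auxiliary idempotent construction'' you gesture at is exactly this random-coefficient argument, and it is the heart of the proof.
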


This is a consequence of the following proposition, which is more
general than the corresponding results in \cite{BR}.

\begin{proposition}\label{random} Let $p>2$ and $c>0$, $\varepsilon>0$.
Then there exists  $q_0:=q_0(c, \varepsilon)$ such that, if
$q>q_0$ and $P:=\sum_0^{q-1}a_h e_h$  is a  polynomial of degree
less than $q$ that satisfies the two conditions
\begin{equation}\label{cond-c}
cq\max_h|a_h| \leq \sum |a_h|\leq c^{-1}|P(1/q)|,
\end{equation}
\begin{equation}
|P(1/q)|\geq  c \left( \sum_{k=0}^{q-1}|P(k/q)|^p \right)^{1/p},
\label{concentr}
\end{equation}
then there exists a polynomial $Q$ of degree less than $q$, whose
coefficients are either $a_h/|a_h|$ or $0$, such that
 \begin{eqnarray}\label{at-p}
   |Q(1/q)|&\geq & (1-\varepsilon) |P(1/q)|,\\
   \left(\sum_{k=0}^{q-1}|Q(k/q)-P(k/q)|^p\right)^{1/p} & \leq &
   \varepsilon |P(1/q)|.\label{in-mean}
  \end{eqnarray}
\end{proposition}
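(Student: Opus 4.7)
The plan is a randomised rounding of $P$ to phases. With $\alpha_h:=a_h/|a_h|$ when $a_h\ne 0$ (and $\alpha_h:=0$ otherwise), and assuming $\max_h|a_h|\le 1$ (the essential case, achievable after a suitable scaling of $P$), let $X_h$ be independent Bernoulli variables with $\mathbb{P}(X_h=1)=|a_h|$ and set
\[Q:=\sum_{h=0}^{q-1} X_h\,\alpha_h\,e_h.\]
Then $Q$ has the required coefficient structure, and $\mathbb{E}[Q(x)]=P(x)$ pointwise; the deviation $R:=Q-P$ is, at every $x$, a sum of independent mean-zero complex random variables of modulus at most $1$ whose variances sum to $\sum_h|a_h|(1-|a_h|)\le\sum|a_h|$. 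It remains to check that (\ref{at-p}) and (\ref{in-mean}) each hold with probability tending to $1$, which suffices by a union bound.

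For (\ref{at-p}), Chebyshev applied to $R(1/q)$ yields
\[\mathbb{P}\bigl(|R(1/q)|>\varepsilon|P(1/q)|\bigr)\le\frac{\sum|a_h|}{\varepsilon^2|P(1/q)|^2}\le\frac{1}{c\,\varepsilon^2|P(1/q)|},\]
using $\sum|a_h|\le c^{-1}|P(1/q)|$ from (\ref{cond-c}). Since that same condition, combined with $cq\le\sum|a_h|$, forces $|P(1/q)|\ge c^2 q$, this probability is $O(1/q)$ and tends to $0$.

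For (\ref{in-mean}), I apply Rosenthal's inequality to $R(k/q)=\sum_h(X_h-|a_h|)\alpha_h e(hk/q)$ at each grid point. As a sum of independent centred random variables of modulus at most $1$ with variances summing to at most $\sum|a_h|\ge cq\ge 1$, it satisfies, for $p\ge 2$,
\[\mathbb{E}|R(k/q)|^p\le C_p\max\!\left\{\Bigl(\sum|a_h|\Bigr)^{\!p/2},\ \sum|a_h|\right\}=C_p\Bigl(\sum|a_h|\Bigr)^{\!p/2}.\]
Summing over the $q$ grid points and invoking $|P(1/q)|^p\ge c^p(\sum|a_h|)^p$ together with $\sum|a_h|\ge cq$,
\[\frac{\mathbb{E}\sum_{k=0}^{q-1}|R(k/q)|^p}{\varepsilon^p|P(1/q)|^p}\le\frac{C_p\,q}{\varepsilon^p c^p(\sum|a_h|)^{p/2}}\le\frac{C_p}{\varepsilon^p c^{3p/2}}\,q^{1-p/2}.\]
The strict inequality $p>2$ makes the exponent $1-p/2$ negative, so the ratio tends to $0$; Markov's inequality then yields (\ref{in-mean}) with probability approaching $1$.

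A union bound over the two failure events shows that for $q>q_0(c,\varepsilon,p)$ large enough both (\ref{at-p}) and (\ref{in-mean}) hold simultaneously with positive probability, and any outcome in this event supplies the required $Q$. The main technical step — and the one that forces the hypothesis $p>2$ — is the Rosenthal-type moment bound at each grid point: it is precisely the strictness $p>2$ that converts the variance gain $q^{1-p/2}$ into a vanishing (rather than merely bounded) ratio, so that Markov can conclude; at $p=2$ the same argument gives only an $O(1)$ bound and fails.
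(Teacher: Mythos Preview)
Your proof is correct and essentially identical to the paper's: both normalise to $\max_h|a_h|=1$, construct $Q$ by independent Bernoulli rounding with parameters $|a_h|$, handle \eqref{at-p} via Chebyshev on the variance $\le\sigma$, and handle \eqref{in-mean} via a Rosenthal-type $p$th-moment bound $\mathbb{E}|R(k/q)|^p\le C_p\sigma^{p/2}$ followed by Markov, with the factor $q\sigma^{-p/2}\le c^{-p/2}q^{1-p/2}\to 0$ being exactly where $p>2$ enters. The only cosmetic difference is that you invoke Rosenthal's inequality by name, whereas the paper cites its own lemma (Lemma~\ref{mart-bern}) giving the same bound for Bernoulli sums.
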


Observe that, for $P$ positive definite, $Q$ is an idempotent. In
this case, the first condition can be reduced to $P(0)\geq
cq\max_h|a_h|$. Indeed, the fact that $|P(1/q)|\geq c P(0)$
follows from the second one.

Let us take the proposition for granted, and use it in our
context.

\comment{We claim that this allows us to conclude for the bound
below
\begin{equation}\label{foruniform}
\gamma_p^\sharp > 0.483.
\end{equation}
}

\begin{proof}[Proof of Theorem \ref{th:gammapsharpp2}]
Let us take for $P$ a positive-definite polynomial of degree less
than $q$ for which
$$
\frac{2\left |P\left(\frac 1q\right) \right|^p}{ \sum_{k=0}^{q-1}
\left|P\left(\frac kq\right) \right|^p}\geq c_0 >0.483.
$$
We claim that there exists an idempotent $Q$ for which the same
ratio is bounded below by $c_0 C(\varepsilon)$, with
$C(\varepsilon)$ tending to $1$ when $\varepsilon$ tends to $0$.
Indeed, we can apply the proposition as soon as we have proved
that $P$ satisfies the condition \eqref{cond-c} (uniformly for $q$
large). We have seen that $P$ can be taken as the polynomial of
degree less than $q$, which coincides with $D_n^L$ on the grid
$\mathbb{G}_q$, for $n$ chosen in such a way that $n/q\approx
t=\kappa\sqrt{6/\lambda}$  is small enough so that we approach the
extremum in \eqref{bound-above}. Next, it is easy to see that
$P(0)=n^L$, while $|\hat P(k)|\leq Ln^{L-1}$. So we have
\eqref{cond-c} with  a very small constant $c$, but what is
important that it does not depend on $q$ tending to $\infty$ (for
fixed $\varepsilon$). To conclude the proof, we use the fact that,
by Minkowski's inequality, and using the assumption on $P$, we
have
\begin{align*} \left(\sum_{k=0}^{q-1}
\left|Q\left(\frac kq\right) \right|^p\right)^{1/p}&\leq&
\left(\sum_{k=0}^{q-1} \left|P\left(\frac kq\right)
\right|^p\right)^{1/p}+ \e
|P(1/q)|\\
&\leq & ((2/c_0)^{1/p}+\e)|P(1/q)|\\
&\leq & (1-\e)((2/c_0)^{1/p}+\e)|Q(1/q)|.
\end{align*}
The constant tends to $(2/c_0)^{1/p}$ when $\e$ tends to $0$,
which concludes the proof.
\end{proof}

The same method leads to
\begin{equation}\label{p-fixed-gamma}
\gamma^\sharp_p\geq 2\sup _{L\geq 1}\;\sup_{0<t<1/2}B(Lp, t)^{-1}.
\end{equation}

This finishes the proof of the part of Theorem
\ref{th:concentration} concerning $p>1$, except for the proof of
Proposition \ref{random}, which we do now. It relies on the
construction of random polynomials, which may have an independent
interest.

\begin{proof}[Proof of Proposition \ref{random}] Without loss of
generality we may assume that $\max_h |a_h|=1$. We put
$\alpha_k:=|a_k|$ and $\sigma:=\sum \alpha_k$, so that $0\leq
\alpha_k\leq 1$ and $ cq\leq \sigma\leq c^{-1} |P(1/q)|$. We take
a sequence of independent random variables $X_0,X_1,\dots,X_{q-1}$
that follow the Bernoulli law with parameters $\alpha_0, \alpha_1,
\dots, \alpha_{q-1}$ on some probability space $(\Omega,
\mathcal{A}, \mathbb{P})$ and set
$$
P_\omega:=\sum_0^{q-1}b_h X_h(\omega)e_h
$$
with $b_h:=a_h/|a_h|$ for $a_h\neq 0$, otherwise $b_h=0$. Then the
expectation of $P_\omega$ is equal to $P$. We will prove that
$Q=P_\omega$ satisfies \eqref{at-p} and \eqref{in-mean} with
positive probability. Let us first consider \eqref{at-p}, and
prove that the converse inequality holds with probability less
than $1/3$ for $q$ large enough. Indeed, one has the inclusions
$$
\{\omega; |P_\omega(1/q)|\leq (1-\varepsilon)|P(1/q)|\}\subset \{\omega;
|P_\omega(1/q)-P(1/q)|>\varepsilon|P(1/q)|\},
$$
so that, by Markov inequality, using the fact that the variance of
$P_\omega (1/q)$ is $\sum \alpha_k(1-\alpha_k)\leq \sigma$, we
have
$$
\mathbb{P}\left(\left|\frac{P_\omega(1/q)}{P(1/q)}\right| \leq
1-\e\right)\leq c^{-2}\e^{-2}\sigma^{-1}.
$$
By \eqref{cond-c} we know that this quantity is small for $q$
large.

Next, to show \eqref{in-mean}, in view of \eqref{cond-c} it is
sufficient to prove that with probability $2/3$,
$$\sum_{k=0}^{q-1}|P_\omega(k/q)-P(k/q)|^p \leq c^{p}\varepsilon^p
\sigma^p.$$ We claim that there exists some uniform constant
$C_p$, for $p>2$, such that, for each $k$,
\begin{equation}\label{burkh}
\mathbb{E}(|P_\omega(k/q)-P(k/q)|^p)\leq C_p \sigma^{p/2}.
\end{equation}
Let us take this for granted and finish the proof. By simple
estimation
$$\mathbb{P}\left(\sum|P_\omega(k/q)-P(k/q)|^p \geq (c\varepsilon\sigma)^p
\right)\leq c^{-p}\varepsilon^{-p}C_p \,q\, \sigma^{-p/2}.$$ From
this we conclude easily, using the fact that $\sigma\geq cq$, so
that the right hand side tends to $0$ when $q$ tends to infinity.
Finally, \eqref{burkh} is a well-known property of independent
sums of Bernoulli variables, e.g. in \cite{BR} (Lemma 54) a proof
of the following lemma can be found.
\begin{lemma}\label{mart-bern}For $p>2$ there exists some constant $C_p$
with the following property. Let $\alpha_k\in [0,1]$ and $b_k\in
\CC$ be arbitrary for $k=0,1,\dots,N$. For $X_k$ a sequence of
independent Bernoulli random variables with parameter $\alpha_k$,
we have
$$
\mathbb{E}\left(|\sum_{k=0}^Nb_k (X_k-\alpha_k)|^{p}\right)\leq
C_p\cdot \max_{k=1,\dots,N} |b_k|^{p} \cdot (1+\sum_{k=0}^N
\alpha_k)^{p/2}.
$$
\end{lemma}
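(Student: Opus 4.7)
The plan is to apply Rosenthal's inequality for sums of independent centered random variables, which is the standard route to sharp $L^p$ moment bounds for such sums when $p > 2$. Set $Y_k := b_k(X_k-\alpha_k)$ and $S := \sum_{k=0}^N Y_k$. The $Y_k$ are independent, have mean zero, and are uniformly bounded by $M := \max_k |b_k|$, since $|X_k-\alpha_k| \le 1$. Rosenthal's inequality states that for $p>2$ there is a constant $C_p$ with
$$
\mathbb{E}|S|^p \;\le\; C_p \,\max\!\left(\,\sum_{k=0}^N \mathbb{E}|Y_k|^p,\; \Bigl(\sum_{k=0}^N \mathbb{E}|Y_k|^2\Bigr)^{p/2}\right).
$$

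First I would bound each of the two terms individually using the fact that $|X_k-\alpha_k|\le 1$ combined with $p\ge 2$, so that $|X_k-\alpha_k|^p \le (X_k-\alpha_k)^2$ pointwise. This gives
$$
\mathbb{E}|Y_k|^p \;\le\; |b_k|^p\,\mathbb{E}(X_k-\alpha_k)^2 \;=\; |b_k|^p\,\alpha_k(1-\alpha_k) \;\le\; M^p\,\alpha_k,
$$
and summing yields $\sum \mathbb{E}|Y_k|^p \le M^p \sum\alpha_k$. For the variance term, directly $\sum \mathbb{E}|Y_k|^2 \le M^2 \sum \alpha_k$, so $(\sum \mathbb{E}|Y_k|^2)^{p/2} \le M^p (\sum\alpha_k)^{p/2}$. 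Because $p/2\ge 1$, both $\sum\alpha_k$ and $(\sum\alpha_k)^{p/2}$ are bounded by $(1+\sum\alpha_k)^{p/2}$, so Rosenthal's inequality gives the claimed bound with the constant $C_p$ from Rosenthal.

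The only nontrivial ingredient is Rosenthal's inequality itself, and if one wishes to avoid quoting it as a black box, a self-contained proof proceeds via Bernstein's concentration inequality: with $|Y_k|\le M$ and $\mathrm{Var}(S)\le M^2\sum\alpha_k =: \sigma^2$, one has
$$
\mathbb{P}(|S|>t) \;\le\; 2\exp\!\left(-\frac{t^2/2}{\sigma^2 + Mt/3}\right).
$$
Integrating $\mathbb{E}|S|^p = \int_0^\infty pt^{p-1}\mathbb{P}(|S|>t)\,dt$ and splitting at $t=\sigma^2/M$ (subgaussian vs. subexponential regimes) produces the estimate $\mathbb{E}|S|^p \le C_p(\sigma^p + M^p)$, which is bounded by $2C_p\, M^p(1+\sum\alpha_k)^{p/2}$. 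Either route works, and the main (minor) obstacle is merely the bookkeeping needed to absorb the additive constant $1$ on the right-hand side, which guarantees the bound is non-vacuous when $\sum\alpha_k$ is small; this is free from the argument above since $(\sum\alpha_k)^{p/2}$ and $1$ are both at most $(1+\sum\alpha_k)^{p/2}$.
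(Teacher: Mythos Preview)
Your proof is correct. Rosenthal's inequality is exactly the right tool here, and your bookkeeping is clean: the key observation that $|X_k-\alpha_k|\le 1$ lets you dominate $\mathbb{E}|X_k-\alpha_k|^p$ by the variance $\alpha_k(1-\alpha_k)\le\alpha_k$, after which both terms in Rosenthal's bound are controlled by $M^p(1+\sum\alpha_k)^{p/2}$. The one cosmetic point is that the $Y_k$ are complex-valued, but Rosenthal applies after splitting into real and imaginary parts (at the cost of an absolute constant), so this is harmless.

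As for comparison with the paper: the paper does not actually prove this lemma here; it quotes it as Lemma~54 of the companion paper \cite{BR} and calls it ``a well-known property of independent sums of Bernoulli variables.'' The equation label \texttt{burkh} and the lemma label \texttt{mart-bern} strongly suggest that the argument in \cite{BR} goes through Burkholder's martingale square-function inequality rather than Rosenthal directly. These are essentially the same route, since for sums of independent centered variables Burkholder's inequality specializes to (and is the classical source of) Rosenthal's bound. Your alternative derivation via Bernstein's tail inequality and integration is also perfectly valid and has the mild advantage of being fully self-contained.
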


\end{proof}

Of course one would like to know whether constants are
 the same for classes $\PP_q$ and $\PP_q^+$. We know that it is
 not the case for $p=2$  thanks to
 the work of D\'{e}champs-Gondim, Lust-Piquard and Queff\'{e}lec, but the
 last proposition induces to conjecture that they are the same for
 $p>2$. Note that Proposition \ref{random} holds when \eqref{cond-c}  is replaced by the
 weaker assumption $\sigma\geq \delta(q)q^{2/p}\max |a_h|$, with $\delta$ tending to infinity with $q$.

\section{Failure of uniform $1$-concentration on $\ZZ_q$}\label{sec:proof}

We prove here the negative result of Theorem
\ref{th:concentration}. It will be more convenient, in this
section, to work directly on $\ZZ_q$, and not on the grid $
\mathbb{G}_q$. We now restrict to $q$ prime, which is sufficient
to conclude negatively.

 Assume that
there exists some constant $c$  and some idempotent $f=\sum_{h\in
H}e_h$ such that
\begin{equation}\label{1-conc}
|f(1)| \geq c  \sum_{k=0}^{q-1}|f(k)|.
\end{equation}
 We claim
that $H$ may be assumed having cardinality $\leq q/2$. Indeed, $H$
is certainly not the whole set $\{0, \cdots, q-1\}$, since the
corresponding idempotent is  $q$ times the Dirac mass at $0$.
Moreover, the idempotent $\widetilde{f}$, having spectrum $^c H$,
takes the same absolute values as $f$ outside $0$, while its value
at $0$ is $q-\card H$. So, if $\card H>q/2$, then $\widetilde{f}$
satisfies also \eqref{1-conc}.

From now on, let $r:= \card H \leq q/2$. We have by assumption
\eqref{1-conc} $\sum_{k=0}^{q-1}|f(k)|\leq |f(1)|/c \leq f(0)/c
=r/c$. So the function
$$
g:= r^{-1}\left (f-r\delta_0\right)
$$
 is $0$ at $0$, has $\ell^1$ norm bounded by $\frac 1c+1$, while
its Fourier coefficients are equal to $1/r-1/q$ ($r$ of them), or
$-1/q$, since the delta function has all Fourier coefficients
equal to $1/q$. But, according to Theorem 1.3 of \cite{GK}, we
should have $q\min_k |\hat g(k)|$ tending to $0$ when $q$ tends to
$\infty$ (note that the Fourier transform here is replaced by the
inverse Fourier transform in \cite{GK}, which is the reason for
multiplication by $q$ compared to the statement given there). This
gives a contradiction, and allows to conclude that there is no
uniform $1$-concentration. This finishes the proof.

\bigskip

We leave the following as an open question.
\begin{problem} In line with Definition \ref{def:gammapsharp},
for given fixed $q$ denote
$\gamma^{\sharp}_1(q):=\max\limits_{f\in \PP_q}
2|f(1)|/\sum_{k=0}^{q-1} |f(k)|$. Determine
$\beta:=\liminf\limits_{q\to\infty} \log (1/\gamma^{\sharp}_1(q))/
\log\log q$.
\end{problem}

Using the full strength of the result of \cite{GK}, the constant
$c$ in the proof of Theorem \ref{th:concentration} may be chosen
uniformly bounded from below in $q$ by $\log^{-\alpha}q$, with
$\alpha$ less than $1/3$ (that is, the proof by contradiction
shows that $c>\log^{-\alpha} q$ is not possible, hence $\beta \geq
1/3$).  On the other hand the Dirichlet kernel exhibits
$\gamma_1^{\sharp}(q)\geq C / \log q$, i.e. $\beta\leq 1$. This
leaves open the question if $\beta$ achieves 1, i.e.
$\log(1/\gamma_1^{\sharp}(q))/\log\log q$ can be taken anything
less than $1$. The problem is in relation with the Littlewood
conjecture on groups $\ZZ_q$, for which there has been new
improvements by Sanders \cite{S}.

\section{$2$-concentration on measurable sets}

We prove in this section that $\gamma_2\geq \gamma_2^\sharp$. The
converse inequality follows from the fact that the constant for
measurable sets is smaller than the one when restricted to open
sets, which is $\gamma^\sharp _2$, whose explicit value is given
by \eqref{case2-q}. In this paragraph we shall basically use the
method of Anderson et al. \cite{Many}. Our improvements are mainly
expository. The method is valid for all $p>1$, and we will write
it in this context, even if better results can be obtained for
$p\neq 2$. Indeed, it will be easier, later on, to explain how to
improve the method starting from this first one.

So we are going to prove the following proposition.

\begin{proposition}\label{p:comparison} For $p>1$, we have
$$\gamma_p\geq \gamma^\sharp _p.$$
\end{proposition}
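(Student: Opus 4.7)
The plan is to transfer the discrete concentration bound $\gamma_p^\sharp$ on the grid $\mathbb{G}_q$ to continuous concentration on the torus by matching the ``heavy'' grid points of a concentrating polynomial to Lebesgue density points of $E$. Given $E$ symmetric measurable with $|E|>0$ and a small $\eta>0$, I first use the Lebesgue density theorem together with the symmetry of $E$ to pick a density point $x_0\in E\cap (0,1/2)$, so that $-x_0\in E$ is also a density point, and find $\delta_0>0$ with $|E\cap(\pm x_0-\delta,\pm x_0+\delta)|\geq (1-\eta)\cdot 2\delta$ for all $\delta<\delta_0$. I then approximate $x_0$ by a reduced fraction $p_0/q$ with $q$ a large prime and $\gcd(p_0,q)=1$, taking $q$ so large that $1/q$ and $|x_0-p_0/q|$ are both less than $\delta_0/4$. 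The symmetric pair of intervals $I_\pm := (\pm p_0/q - 1/(2q),\, \pm p_0/q + 1/(2q))$, each of measure $1/q$, then satisfies $|I_\pm\cap E|\geq (1-2\eta)|I_\pm|$.

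By Remark \ref{not-prime} and the definition of $\gamma_p^\sharp$, for $q$ large enough there exists an idempotent $R\in \mathcal{P}_q$ with
\[
2|R(p_0/q)|^p \geq (\gamma_p^\sharp-\eta)\sum_{k=0}^{q-1}|R(k/q)|^p,
\]
which I view as an idempotent trigonometric polynomial of degree less than $q$ on $\mathbb{T}$. To pass from the discrete ratio to the continuous ratio $\int_E|R|^p/\int_\mathbb{T}|R|^p$, I invoke discrete Parseval for $p=2$ (which gives exactly $\int_\mathbb{T}|R|^2 = q^{-1}\sum_k|R(k/q)|^2$) or Marcinkiewicz--Zygmund type comparisons between $L^p(\mathbb{T})$ and $\ell^p$ on the grid for general $p>1$. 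The crucial local estimate is that $\int_{I_\pm}|R|^p$ is close to $|R(\pm p_0/q)|^p/q$, which, combined with the density inequality and an $L^\infty$ bound of $|R|$ on $I_\pm$ to control the integral over $I_\pm\setminus E$, yields
\[
\int_E|R|^p \geq (1-O(\eta))\,\frac{|R(p_0/q)|^p+|R(-p_0/q)|^p}{q} \geq (1-O(\eta))(\gamma_p^\sharp-\eta)\int_\mathbb{T}|R|^p.
\]
Letting $\eta\to 0$ and $q\to \infty$ along suitable sequences then gives $\gamma_p\geq \gamma_p^\sharp$.

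The hard step is the localization claim $\int_{I_\pm}|R|^p\approx |R(\pm p_0/q)|^p/q$: since $|R|^p$ has degree up to $p(q-1)$ and could a priori oscillate on scales well below $1/q$, this approximation is not automatic for a generic idempotent. What saves the argument is that (near-)optimizers of $\gamma_p^\sharp$ are essentially Dirichlet-kernel-type polynomials, whose squared modulus has continuous scale of oscillation $\sim 1/n$ with $n\approx tq$ for the optimal $t\in(0,1/2)$, making $|R|^p$ nearly constant on the intervals of length $1/q$ centered at $\pm p_0/q$. The same localization underlies the DPQ formula \eqref{case2-q}, and handling it carefully for general $p$ is the technical heart of the Anderson et al.\ method and, I expect, the main obstacle of the proof.
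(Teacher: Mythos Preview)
Your proposal has a genuine gap, and the missing idea is precisely the one that drives the paper's proof.

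You try to use the discrete near-optimizer $R\in\PP_q$ \emph{itself} as the concentrating idempotent on $\TT$. This fails for two related reasons. First, the Marcinkiewicz--Zygmund comparison you invoke only gives $\int_\TT|R|^p\leq K_p\, q^{-1}\sum_k|R(k/q)|^p$ with a constant $K_p>1$ when $p\neq 2$, so at best your chain of inequalities yields $\gamma_p\geq\gamma_p^\sharp/K_p$, not $\gamma_p\geq\gamma_p^\sharp$. Second, and more seriously, your localization claim $\int_{I_\pm}|R|^p\approx |R(\pm p_0/q)|^p/q$ is false even for the Dirichlet-kernel optimizers you appeal to: if $R=D_n$ with $n\sim tq$ for the optimizing $t\in(0,1/2)$ (so $t$ is bounded away from $0$), then on an interval of length $1/q$ the argument $\pi n x$ sweeps an arc of length $\pi n/q\sim\pi t$, so $|D_n|$ varies by an order-one factor across $I_\pm$. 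Your proposed control of $\int_{I_\pm\setminus E}|R|^p$ via an $L^\infty$ bound would then cost another multiplicative constant.

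The paper avoids all of this by not using $R$ alone: the concentrating idempotent is the product $Q(t):=R(t)\,D_n(qt)$, where $D_n$ is a \emph{second} Dirichlet kernel with $n$ chosen large as a function of $q$. Since $R$ has degree $<q$ and $D_n(q\cdot)$ has spectrum in $q\ZZ$, the product is again an idempotent. The factor $|D_n(qt)|^p$ is $1/q$-periodic and, by Lemma~\ref{dirichlet}, concentrates its $L^p$ mass on intervals of scale $\theta/q^2$ around each grid point. On that tiny scale, the Bernstein-type Lemma~\ref{l:Bernstein} (inequality \eqref{maj-grid1}) makes $|R|^p$ genuinely almost constant near each $k/q$, so $\int_\TT|Q|^p$ becomes $(1+O(\e))\,q^{-1}\tau^p\sum_k|R(k/q)|^p$ with $\tau^p=\int_\TT|D_n|^p$, and $\int_E|Q|^p$ picks up $(1-O(\e))\,q^{-1}\tau^p\cdot 2|R(a/q)|^p$. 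The factors $\tau^p/q$ cancel exactly, and the discrete ratio $\gamma_p^\sharp$ survives with no parasitic constants. Your Lebesgue-density approach to finding $a/q$ is a legitimate substitute for the Khintchine-type Lemma~\ref{l:grid}, but without the $D_n(qt)$ tensorization the transfer from $\ell^p(\GG)$ to $L^p(\TT)$ cannot be made lossless.
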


\begin{proof}[Proof]
We are given an arbitrary symmetric measurable set, with $|E|>0$.
We want to find some idempotent $f$ that concentrates on $E$. We
will use a variant of Khintchine's Theorem in Diophantine
approximation, which we summarize in the next lemma (Proposition
36 in \cite{BR}).

\begin{lemma}\label{l:grid}Let $E$ be a measurable set
of positive measure in $\TT$. For all $\theta>0$, $\eta>0$ and
$q_0\in \mathbb{N}$, there exists an irreducible fraction $a/q$
such that $q>q_0$ and
\begin{equation}\label{khint}
\left|\left (\frac aq-\frac{\theta}{q^2},\frac aq
+\frac{\theta}{q^2}\right)\cap E\right|\geq (1-\eta)\frac{2\theta}
{q^2}.
\end{equation}
Moreover, given a positive integer $\nu$, it is possible to choose
$q$ such that $(\nu, q)=1$.
\end{lemma}

The parameter $\theta$ will play no role at the moment, so we can
set it as $1$. It will appear as necessary for generalizations
only later. We consider the grid $ \mathbb{G}_q:= \{ k/q; k=0, 1,
\cdots, q-1\}$ contained in the torus, for  $a$ and $q$ given by
Lemma \ref{l:grid}, for given values of $\eta$ and $q_0$ to be
fixed later on. We assume that $q$ is sufficiently large so that
we can find $R\in \PP_q$ with the property  that
\begin{equation}\label{given}
2|R(a/q)|^p\geq  c \sum_{k=0}^{q-1}|R(k/q)|^p,
\end{equation}
with $\e>0$ chosen arbitrarily small and $c>\gamma_p^\sharp-\e$.
When $a=1$, the existence of such a $P$ follows from the
definition of $\gamma_p^\sharp$. See Remark \ref{not-prime} for
the fact that we can replace $1$ by $a$ whenever $a$ and $q$ are
co-prime. We then claim that the polynomial $Q(t):=R(t) D_n(qt)$,
which is an idempotent, is such that
$$
\int_E |Q|^p \geq c \kappa(\varepsilon)\int_\TT |Q|^p,
$$
with $\kappa(\varepsilon)<1$ tending to $1$ when $\varepsilon$
tends to $0$, and parameters $\eta$ and $n$ are chosen suitably
depending on $\e$.

The idea of the proof goes as follows: since $D_n$ concentrates
the $L^p$ norm near $0$ (it can be concentrated in any subset $F$
of the interval $\left(-\frac{1}{q}, +\frac{1}{q}\right)$, with
$|F|>2(1-\eta)/q$), then $D_n(qt)$ concentrates equally on  the
$q$ subsets around the  points of the grid $\mathbb G_q$. We take
$F$ such as $qt$ belongs to $F$ when $t$ belongs to $\left(\frac
aq-\frac{\theta}{q^2},\frac aq +\frac{\theta}{q^2}\right)\cap E$.
Now multiplication by $R$ will concentrate the integral on the
subset around $a/q$, which we wanted. We need to know that the
polynomial $R$ is almost constant on each of these subsets, which
is given by Bernstein's Theorem.

Let us now enter into details. We have the following lemma on
Dirichlet kernels.

\begin{lemma}\label{dirichlet}
Let $p>1$. For $\e$ given, one can find $\eta>0$ and $\delta_0>0$
such that, for all  $0<\delta<\delta_0$ , if $F$ is a measurable
subset of $(-\delta, +\delta)\subset \TT$ of measure larger  than
$2\delta(1-\eta)$, we can find some suitable $n\in\NN$ so that
$$\int_F |D_n |^p\geq (1-\e)\int_\TT |D_n|^p.$$
\end{lemma}
\begin{proof}[Proof] It is well known that $\int_\TT |D_n|^p\geq
\kappa_p n^{p-1}$ (see \cite{Many} for instance for precise
estimates). So it is sufficient to prove that we can obtain
$$ \int_{^cF} |D_n |^p\leq \varepsilon n^{p-1}.$$
This is a consequence of the fact that $$\int_{(-\delta,
+\delta)\setminus F} |D_n|^p\leq 2n^p\eta \delta,$$ while
$$\int_{\TT\setminus(-\delta, +\delta)} |D_n|^p\leq \left(\frac \pi
2\right)^p\int_{|t|>\delta} t^{-p}dt= \kappa'_p \delta^{1-p}.$$ We
choose for $n$ the smallest integer larger than $
(2\kappa'_p/\e)^{1/(p-1)}\delta^{-1}$ and $\eta$ such that
$8(2\kappa'_p/\e)^{1/(p-1)}\eta=\e$.

We remark that here we did not need the flexibility linked to the
parameter $\delta_0$. It is here for further generalizations.
\end{proof}

Next we recall classical Bernstein and Marcinkiewicz-Zygmund type
inequalities, in the forms tailored to our needs and proved in
 \cite{BR}, Lemma 41. Recall that here polynomials are Taylor polynomials,
 that is, trigonometrical polynomials with only non negative
 frequencies, which is the case for the polynomial $R$.
\begin{lemma}\label{l:Bernstein}
For $1< p<\infty$ there exists a constant $K_p$ such that, for $P$
a polynomial of degree less than $q$ and for $|t|<1/2$, we have
the two inequalities
\begin{equation}\label{maj-grid}
\sum_{k=0}^{q-1}|P( t+k/q)|^p\leq K_p \sum_{k=0}^{q-1}|P(k/q)|^p,
\end{equation}
\begin{equation}\label{maj-grid1}
\sum_{k=0}^{q-1}\left||P(t+k/q)|^p-|P(k/q)|^p \right| \leq K_p
|qt| \sum_{k=0}^{q-1}|P(k/q )|^p.
\end{equation}
\end{lemma}
For our polynomial $R$, this gives  the inequality
\begin{equation}\label{bernstein-a}
||R(t)|^p-|R(a/q)|^p|\leq 2c^{-1}K_p qt |R(a/q)|^p,
\end{equation}
This implies that, for $|t-\frac aq|<\frac \theta {q^2}$ with $q$
large enough,
 \begin{equation}\label{inequality-1}
|R(t)|^p\geq (1-\e)|R(a/q)|^p.
\end{equation}
We have also, for $|t|<\frac\theta {q^2}$, that
$$
\sum_{k=0}^{q-1}|R(t+k/q)|^p \leq \sum_{k=0}^{q-1}|R(k/q)|^p
+2K_p\frac \theta q c^{-1}|R(a/q)|^p
$$
which leads to the inequality, valid for $|t|<\frac\theta {q^2}$
for $q$ large enough,
\begin{equation}\label{inequality-2}
\sum_{k=0}^{q-1}|R(t+k/q)|^p \leq 2c^{-1}(1+\e)|R(a/q)|^p.
\end{equation}
Let us finally remark that \eqref{maj-grid} leads to the
following, valid for all $t$.
\begin{equation}\label{inequality-3} \sum_{k=0}^{q-1}|R(t+k/q)|^p
\leq  2 c^{-1}K_p|R(a/q)|.
\end{equation}

We can now proceed to the proof of the required inequality for
$R$. We have fixed $\e$ and chosen $q_0$ large enough so that
estimates \eqref{inequality-1} and \eqref{inequality-2} hold
(recall that for the moment $\theta=1$). Then we use Lemma
\ref{l:grid}, which fixes some $a/q$, and find $D_n$, which is
assumed to be adapted to $\delta:=\frac{\theta}q$. We denote
$\tau^p:=\int_{\TT}|D_n|^p$ and $I:= \left(\frac
aq-\frac{\theta}{q^2},\frac aq +\frac{\theta}{q^2}\right)$.
\begin{align}\notag
\frac 12 \int_E |Q|^p \geq \int_{I\cap E} |R|^p|D_n|^p & \geq (1-
\varepsilon)|R(a/q)|^p \int_{I\cap E} |D_n(qt)|^pdt \\
& \geq \frac{1}{q}(1-\varepsilon)|R(a/q)|^p
~~ \int_{F\cap (-\delta, +\delta)} |D_n|^p\notag \\
& \geq \frac{(1-\varepsilon)^2\tau^p}{q}|R(a/q)|^p. \label{ineq-3}
\end{align}
Here $F$ is the pre-image  by $t\mapsto qt$ of $I\cap E$, which
has measure at least $2(1-\eta)\delta$, and so concentrates the
integral of $|D_n|^p$.

Let us now look for a bound of the whole integral. We write
$$\int_\TT |Q|^p = \int_{-1/q}^{1/q}\left(\sum_k|R(t+\frac
kq)|^p\right)|D_n(qt)|^p dt$$ and cut the integral into two parts,
depending on the fact that $|t|\leq \frac \theta{q^2}$ or not. For
the first part we use \eqref{inequality-2},  for the second one
\eqref{inequality-3}. We recall that the integral of $D_n$ outside
the interval $(-\theta/q, \theta/q)$ is bounded by $\e \tau^p$.
Finally
\begin{eqnarray*}
 \int_\TT |Q|^p
& \leq &2c^{-1}\,\frac{1+\varepsilon}{q}|R(a/q)|^p
~~ \int_{\TT} |D_n|^p +2c^{-1} K_p\,\cdot \,\frac{\varepsilon}{q} |R(a/q)|^p \int_{\TT} |D_n|^p \\
& \leq &2c^{-1}\,\frac{(1+C\varepsilon)\tau^p}{q}|R(a/q)|^p.
\end{eqnarray*}
We conclude by comparison with \eqref{ineq-3}.
\end{proof}

As said above, we have obtained optimal results for $p=2$. At this
point, we can see how results can be improved  for $p\neq 2$. The
main point is the possibility to replace the Dirichlet kernel
$D_n$ by an idempotent $T$, which satisfies nearly the same
properties as the Dirichlet kernel that are summarized in Lemma
\ref{dirichlet}, but has the additional property to have
arbitrarily large gaps. More precisely, we say that {\sl $T$  has
gaps larger than $N$ } if $|k-k'|\leq N$ implies that one of the
two Fourier coefficients $\hat T(k)$ and $\hat T(k')$ is zero. We
state the existence of such idempotents $T$ as a lemma, and refer
to \cite{BR} for their construction.

\begin{lemma}\label{l:peak-meas}
Let $p>0$ different from $2$. Then for $\varepsilon>0$ there
exists $\delta_0>0$ and $\eta>0$ such that, for all
$\delta<\delta_0$ and $N\in \mathbb{N}$,  if $E$ is a measurable
set  that satisfies, for $\alpha=0$, the assumption $|E\cap [
\alpha-\delta, \alpha+\delta]|>2(1-\eta)\delta$, then there exists
an idempotent $T$ with gaps larger than $N$ such that
$$\int_{E\cap
[\alpha-\delta, \alpha+\delta]} |T|^p>(1-\varepsilon)\int_0^1
|T|^p.$$ Moreover, if $p$ is not an even integer, this is also
valid for $\alpha=1/2$.
\end{lemma}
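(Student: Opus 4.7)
The plan is to produce an idempotent $T$ with spectrum having gaps at least $N$, that peaks sharply at $\alpha$ and whose $L^p$ mass lies almost entirely in a window $[\alpha-\delta, \alpha+\delta]$ of size $2\delta$. Once such $T$ is in hand, the conclusion follows from a routine sup-norm estimate: since $|E \cap [\alpha-\delta,\alpha+\delta]|\geq 2(1-\eta)\delta$, the missing mass is controlled by
$$\int_{[\alpha-\delta,\alpha+\delta]\setminus E}|T|^p \;\leq\; 2\eta\delta\, \|T\|_\infty^p,$$
which is a negligible fraction of $\int_\TT |T|^p$ once $\eta$ is chosen small enough relative to the peak-to-total ratio of $T$. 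This comparison yields the required $(1-\varepsilon)$-concentration on $E\cap[\alpha-\delta,\alpha+\delta]$.

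The heart of the matter is the construction of the peak kernel itself. Lemma \ref{dirichlet} furnishes such a kernel via $D_n$ when no gap constraint is imposed, but here one cannot simply use $D_n(Nx)$: this spreads the peak uniformly over the $N$ translates $\{k/N\}_{k=0}^{N-1}$, so none of them captures a $(1-\varepsilon)$ fraction of the total $L^p$-mass. I would instead invoke the construction from \cite{BR}, which produces idempotents with arbitrarily sparse spectrum---gaps as large as one wishes---while still satisfying $\int_{-\delta,\delta}|T|^p \geq (1-\varepsilon/2)\int_\TT|T|^p$ for any prescribed small $\delta$. That construction builds on the Mockenhaupt--Schlag idempotents from the Hardy--Littlewood majorant problem and decisively uses $p\neq 2$: the identity $\int_\TT|T|^2=\#H$ makes the $L^2$ norm a purely combinatorial invariant of the spectrum, forbidding the peak enhancement that is needed once the gap constraint is imposed.

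For the variant at $\alpha=1/2$, I would first build a peak-at-zero idempotent $S$ by the previous step, arranging its spectrum $H$ to lie inside the even integers, still with gaps $\geq N$; then $T(x) := S(x-1/2)$ has Fourier coefficients $(-1)^h \hat{S}(h)$, which remain in $\{0,1\}$ because the signs $(-1)^h$ are all $+1$ on $H$. Thus $T$ is an idempotent with the same spectrum as $S$, the same $L^p$-concentration profile as $S$, but peaking at $1/2$ rather than $0$. The restriction ``$p$ is not an even integer'' enters because for even $p$ the parity constraint on $H$ conflicts with the gap condition in the \cite{BR} peak-kernel construction, so one cannot simultaneously realise the large peak-to-total $L^p$-ratio, the gap condition, and the parity requirement.

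The decisive obstacle is the peak-kernel construction of the second paragraph: producing an idempotent whose sparse spectrum coexists with $L^p$-mass concentrated in a single tiny window (rather than being diluted across many translates) is precisely the technical core of \cite{BR} and is what ultimately forces $p\neq 2$. All other ingredients---the sup-norm bound against $[\alpha-\delta,\alpha+\delta]\setminus E$, the translation trick for $\alpha=1/2$, and the tuning of the parameters $\eta$ and $\delta_0$ from $\varepsilon$ and $p$---are routine once that construction is granted.
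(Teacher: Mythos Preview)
Your outline for $\alpha=0$ is essentially what the paper does: it too defers the actual peak-kernel construction to \cite{BR}, and your sup-norm estimate on $[\alpha-\delta,\alpha+\delta]\setminus E$ is exactly the device used in Lemma~\ref{dirichlet}. So that part is fine.

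The argument for $\alpha=1/2$, however, is broken. If you force the spectrum $H$ of $S$ to lie in the even integers, then $S(x)=\sum_{h\in H}e(hx)$ can be written as $\widetilde S(2x)$ for an idempotent $\widetilde S$, and hence $S$ is $\tfrac12$-periodic. In particular $|S|^p$ places \emph{equal} mass near $0$ and near $1/2$, so you can never have $\int_{-\delta}^{\delta}|S|^p\ge(1-\varepsilon/2)\int_\TT|S|^p$ once $\delta<1/4$. Your computation confirms this: you obtain $\widehat T(h)=(-1)^h\widehat S(h)=\widehat S(h)$ for all $h$, which says precisely that $T=S$; the translation by $1/2$ has done nothing, and the ``peak at $1/2$'' is just the second copy of the peak at $0$. (The same obstruction appears if one tries odd frequencies instead: then $T(x+1/2)=-T(x)$ and $|T|$ is again $\tfrac12$-periodic.)

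The genuine construction in \cite{BR} for $\alpha=1/2$ is not a translation trick; it builds idempotents whose $L^p$ mass is truly asymmetric between $0$ and $1/2$, and this is where the hypothesis that $p$ is not an even integer is actually used (via sign interference that is impossible when $|T|^p$ expands as a nonnegative trigonometric polynomial). Your stated reason for the restriction---a clash between a parity constraint and the gap condition---is not the real obstruction.
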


For the moment we use this lemma with $\alpha=0$. We are no more
restricted to consider polynomials of degree less than $q$ in
order that $R(t)T(qt)$ be an idempotent. It is sufficient that the
degree of $R$ be less than $Nq$, and, since $N$ is arbitrary, this
gives essentially no constraint. The fact that $R$ has degree less
than $q$ was also used for \eqref{inequality-1} and
\eqref{inequality-2}. It is where the flexibility given by  the
parameter $\theta$ can be used: if $R$ has degree less than $q^2$,
then roughly speaking we can also use Bernstein Inequality, but
$\theta/q$ has to be replaced by $\theta$ in \eqref{bernstein-a}.
This is of no inconvenience, since $\theta$ can be chosen
arbitrarily small.

At this point, we could proceed with a polynomial of degree less
than $q^2$ for \eqref{inequality-1}, but certainly not for Lemma
\ref{l:Bernstein}, since such a polynomial can be identically $0$
on the grid $\mathbb{G}_q$. To develop such inequalities for
polynomials $S$ of degree larger than $q$, we will restrict to
those that can be written as $S(t):=R(t)R((q+1)t)$, with $R$ an
idempotent that satisfies \eqref{given}, but for $2p$ instead of
$p$ (so that the condition on $p$ is now $p>1/2$). The important
point is that $S$ is also an idempotent, and so is $ST$ if $T$ has
sufficiently large gaps. Also $|S(k/q)|^p =|R(k/q)|^{2p}$ at each
point of the grid, and in particular at $a/q$. Moreover, it is
easy to see that, for $\theta$ small enough, one still has the
inequalities \eqref{inequality-1}, \eqref{inequality-2} and
\eqref{inequality-3} with $2p$ in place of $p$, both for the
polynomials $R(t)$ and $R((q+1)t)$ (for this last one we have to
choose $\theta$ small enough, as we mentioned earlier.) The fact
that \eqref{inequality-1}, \eqref{inequality-2} and
\eqref{inequality-3} are valid for $S$ follows from Cauchy-Schwarz
Inequality. The rest of the proof goes the same way as the
previous one and leads to the following, for which we leave
details to the reader.

\begin{proposition} \label{2pgivesp} One has $p$-concentration for $p>1/2$, and,
for $p\neq 2$, one has the inequality $\gamma_p\geq
\gamma_{2p}^\sharp$. In particular $\gamma_1\geq
\gamma_2^{\sharp}$.
\end{proposition}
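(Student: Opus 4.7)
The plan is to follow the template of the proof of Proposition \ref{p:comparison}, but with two crucial substitutions that the preceding paragraph has already motivated: replace the Dirichlet kernel $D_n$ by a large-gap idempotent $T$ furnished by Lemma \ref{l:peak-meas}, and replace the ``level-$p$ concentrating'' polynomial $R$ by a product-type idempotent $S(t):=R(t)R((q+1)t)$, where now $R\in\PP_q$ is taken to concentrate at level $2p$ at the point $a/q$.

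Given $\e>0$ and a symmetric measurable set $E\subset\TT$ with $|E|>0$, I would first fix a very small $\theta>0$ (the flexibility needed to absorb Bernstein losses for a polynomial of degree roughly $q^2$), and then a small $\eta>0$. Lemma \ref{l:grid} then supplies an irreducible fraction $a/q$ with $q$ arbitrarily large, coprime to $q+1$ automatically, such that $|I\cap E|\ge 2(1-\eta)\theta/q^2$ for $I:=(a/q-\theta/q^2,a/q+\theta/q^2)$. Using the definition of $\gamma_{2p}^\sharp$ together with Remark \ref{not-prime}, I would pick $R\in\PP_q$ with
\[
2|R(a/q)|^{2p}\ge c\sum_{k=0}^{q-1}|R(k/q)|^{2p},\qquad c>\gamma_{2p}^\sharp-\e.
\]
The polynomial $S(t):=R(t)R((q+1)t)$ has frequencies $h_1+(q+1)h_2$ with $h_1,h_2\in\{0,\dots,q-1\}$, all distinct, so $S$ is an idempotent of degree $<q(q+1)$. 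Because $e((q+1)k/q)=e(k/q)$ on the grid, one has $S(k/q)=R(k/q)^2$, hence $|S(a/q)|^p=|R(a/q)|^{2p}$ and $\sum_k|S(k/q)|^p=\sum_k|R(k/q)|^{2p}$; thus $S$ satisfies the analogue of \eqref{given} at exponent $p$ with constant $c$.

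Next I would apply Lemma \ref{l:peak-meas} with $\alpha=0$, $\delta:=\theta/q$, and $N:=q(q+1)$, to produce an idempotent $T$ with gaps larger than $N$ that concentrates its $L^p$-norm, up to $1-\e$, on the pre-image $F\subset(-\delta,\delta)$ of $I\cap E$ under $t\mapsto qt$. The candidate is
\[
Q(t):=S(t)\,T(qt)=R(t)\,R((q+1)t)\,T(qt).
\]
A direct frequency accounting shows $Q$ is an idempotent: the frequencies coming from $S$ lie in $[0,q(q+1)-2]$, while successive frequencies of $T(qt)$ are separated by more than $Nq\geq q^2(q+1)$, so no collision occurs and every coefficient is $0$ or $1$. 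On $I$, Bernstein's inequality applied separately to $R(t)$ (degree $<q$, variable range $\theta/q^2$) and to $R((q+1)t)$ (variable range $(q+1)\theta/q^2\approx\theta/q$, so only a factor $\theta$ appears in the Bernstein loss, which is why $\theta$ is chosen small) yields the analogues of \eqref{inequality-1}, \eqref{inequality-2}, \eqref{inequality-3} for $S$ at exponent $p$; inequality \eqref{inequality-3} globally follows from Lemma \ref{l:Bernstein} applied to $R$ at exponent $2p$ combined with the identity $|S(t+k/q)|^p\leq |R(t+k/q)|^p|R((q+1)t+(q+1)k/q)|^p$ and Cauchy--Schwarz, since the shifts $(q+1)k/q\pmod 1$ again run over the grid $\mathbb{G}_q$. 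With these ingredients in hand, the computation of $\int_E|Q|^p$ from below and $\int_\TT|Q|^p$ from above proceeds verbatim as in the proof of Proposition \ref{p:comparison}, producing the bound $\gamma_p\ge c\,\kappa(\e)$ with $\kappa(\e)\to 1$; letting $\e\to 0$ gives $\gamma_p\ge\gamma_{2p}^\sharp$, and the particular case $p=1$ yields $\gamma_1\ge\gamma_2^\sharp$. The case $p>1/2$ (concentration itself, without a sharp constant) is the same construction: uniform $2p$-concentration on $\ZZ_q$ for $2p>1$ is already known.

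The main obstacle is the bookkeeping in the two places where Proposition \ref{p:comparison} used features of $R$ being of degree less than $q$. Multiplying by $R((q+1)\cdot)$ blows the degree up to $\sim q^2$, and naive Bernstein estimates on $S$ would introduce a factor $q^2\cdot(\theta/q^2)=\theta$ per factor that is nominally fine but must be controlled at the grid level; this is precisely where the ``$\theta$-flexibility'' built into Lemma \ref{l:grid} and the product structure of $S$ pay off. The second delicate point is the gap condition: Lemma \ref{l:peak-meas} provides $T$ with arbitrarily large gaps, which is what permits $Q=S\cdot T(q\cdot)$ to remain a genuine idempotent despite the enlarged degree of $S$; both issues are essentially resolved once one commits to taking $N$ much larger than $\deg S$ and $\theta$ small compared with $c$ and $K_p$.
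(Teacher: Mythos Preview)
Your proposal is correct and follows essentially the same approach as the paper: replace the Dirichlet kernel by the large-gap idempotent $T$ of Lemma \ref{l:peak-meas} (with $\alpha=0$), take $S(t)=R(t)R((q+1)t)$ with $R\in\PP_q$ concentrating at exponent $2p$, and recover the three key inequalities for $S$ from those for each factor at exponent $2p$ via Cauchy--Schwarz, after which the computation runs as in Proposition \ref{p:comparison}. Your write-up in fact supplies more detail than the paper (which leaves the verification to the reader), notably the explicit frequency count ensuring $Q$ is an idempotent and the role of the parameter $\theta$ in absorbing the Bernstein loss from the higher-degree factor.
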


We could as well have taken $S=R_1R_2$ and used H\"older's
Inequality, taking $R_1$  approaching the maximum concentration on
the grid for the exponent $r$ and $R_2$  approaching the maximum
concentration on the grid for the exponent $s$, with $\frac
pr+\frac ps=1$. This leads to the following generalization of the
last proposition.

\begin{proposition} \label{with-holder} One has $p$-concentration for $p>1/2$, and,
for $p\neq 2$, one has the inequality $\gamma_p\geq
\left(\gamma_{r}^\sharp\right)^{p/r}\left(\gamma_{s}^\sharp\right)^{p/s}$
for all $r>p$ and $s>p$ such that $\frac pr+\frac ps=1$.
\end{proposition}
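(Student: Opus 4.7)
The plan is to follow verbatim the template of Proposition \ref{2pgivesp}, replacing the single idempotent $R$ with two idempotents $R_1, R_2$, and replacing Cauchy--Schwarz by Hölder's Inequality with the dual exponents $r/p$ and $s/p$. These are $>1$ precisely because $r,s>p$, and their conjugacy is exactly the hypothesis $p/r+p/s=1$.

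Fix $\varepsilon>0$ and a symmetric measurable set $E\subset\TT$ with $|E|>0$. For $q$ to be chosen large, apply Lemma \ref{l:grid} to obtain an irreducible fraction $a/q$ with $a$ coprime to $q$ such that $E$ fills all but an $\eta$-fraction of $\left(a/q-\theta/q^2, a/q+\theta/q^2\right)$ for parameters $\eta,\theta>0$ to be calibrated later. Using Remark \ref{not-prime}, pick idempotents $R_1,R_2\in\PP_q$ with
$$
\frac{2|R_1(a/q)|^r}{\sum_{k=0}^{q-1}|R_1(k/q)|^r}\geq \gamma_r^\sharp-\varepsilon,\qquad \frac{2|R_2(a/q)|^s}{\sum_{k=0}^{q-1}|R_2(k/q)|^s}\geq \gamma_s^\sharp-\varepsilon.
$$
Set $S(t):=R_1(t)R_2((q+1)t)$ and $Q(t):=S(t)T(qt)$, where $T$ is the peaking idempotent supplied by Lemma \ref{l:peak-meas} with gaps larger than the degree of $S$. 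As in Proposition \ref{2pgivesp}, the frequency dilation by $q+1$ separates the spectra of the two factors of $S$, so $S$ is an idempotent, and the large gap condition on $T$ makes $Q$ an idempotent as well.

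The crucial discrete estimate is Hölder's Inequality with exponents $r/p$ and $s/p$: at every $t$,
$$
\sum_{k=0}^{q-1}\left|S\!\left(t+\tfrac{k}{q}\right)\right|^p \leq \left(\sum_{k=0}^{q-1}\left|R_1\!\left(t+\tfrac{k}{q}\right)\right|^r\right)^{p/r}\left(\sum_{k=0}^{q-1}\left|R_2\!\left((q+1)t+\tfrac{k}{q}\right)\right|^s\right)^{p/s},
$$
using the periodicity identity $R_2((q+1)(t+k/q))=R_2((q+1)t+k/q)$. Applied at $t=0$, together with the concentration hypotheses on $R_1$ and $R_2$, and using $p/r+p/s=1$ to absorb the factors of $2$, this yields
$$
\frac{2|S(a/q)|^p}{\sum_{k=0}^{q-1}|S(k/q)|^p}\;\geq\;(\gamma_r^\sharp)^{p/r}(\gamma_s^\sharp)^{p/s}-O(\varepsilon).
$$

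Lifting this grid estimate to concentration on $E$ is then parallel to the end of Proposition \ref{p:comparison}: the Bernstein-type bounds \eqref{inequality-1}--\eqref{inequality-3} are applied \emph{separately} to $R_1$ (of degree less than $q$) and to $R_2((q+1)\cdot)$ (of degree less than $q(q+1)$, where the extra factor $q+1$ is compensated by shrinking $\theta$ as indicated in the remarks preceding Proposition \ref{2pgivesp}), and then combined through the Hölder bound above to yield the analogues of \eqref{inequality-2} and \eqref{inequality-3} for $S$; on $|t|>\theta/q^2$ the uniform estimate is paired with the smallness of $\int_{|t|>\theta/q}|T|^p$. The main obstacle is the ordering of parameters: one must fix $\varepsilon$ first, then choose $\eta$ and $\delta_0$ from Lemma \ref{l:peak-meas}, then $\theta$ small enough so that Bernstein's inequality applies effectively to both factors of $S$, and only then let $q$ and the gap parameter $N$ be large; with this bookkeeping every multiplicative loss in the chain is $1+O(\varepsilon)$, and the ratio $\int_E|Q|^p/\int_\TT|Q|^p$ comes out at least $(\gamma_r^\sharp)^{p/r}(\gamma_s^\sharp)^{p/s}(1-O(\varepsilon))$. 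The unconditional statement of $p$-concentration for $p>1/2$ follows by picking any $r,s>\max(p,1)$ with $p/r+p/s=1$, which places us in the regime $\gamma_r^\sharp,\gamma_s^\sharp>0$ guaranteed by Theorem \ref{th:concentration}.
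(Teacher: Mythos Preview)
Your argument is correct and follows precisely the route the paper indicates: take $S(t)=R_1(t)R_2((q+1)t)$ with $R_1,R_2$ near-extremal for $\gamma_r^\sharp$ and $\gamma_s^\sharp$, apply H\"older with exponents $r/p,s/p$ to the grid sums (the identity $2^{p/r+p/s}=2$ being exactly what makes the constants match), and run the Bernstein/peaking machinery of Proposition~\ref{p:comparison} factor by factor with $\theta$ shrunk to accommodate the degree of $R_2((q+1)\cdot)$. This is the same as the paper's (one-line) proof, and your detailed bookkeeping matches what the paper later spells out in the analogous passage of Theorem~\ref{Star}.
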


Before concluding this section, let us make a last observation.
Once we use an idempotent $T$ with arbitrarily large gaps, it is
not difficult to build idempotents with arbitrarily large gaps. It
is sufficient to start from the polynomial $R(\nu t)$, with $\nu$
arbitrarily large. Recall  that when using Lemma \ref{l:grid}, we
can take  $q$ such that $(\nu,q)=1$. This means that there exists
$b$ (mod $q$) such that $\nu a=b$ (mod $q$), and we choose $R$
that satisfies \eqref{given}, but with $b/q$ in place of $a/q$.
The rest of the proof can be adapted. We state it as a
proposition.

\begin{proposition} In Proposition \ref{p:comparison} and Proposition
\ref{with-holder}, when $p\neq 2$, we can have arbitrarily large
gaps. That is, when $1/2<p\neq 2$, given a symmetric measurable
set $E$ of positive measure, and any constant $c<\gamma_p^\sharp$
(resp.
$\left(\gamma_{r}^\sharp\right)^{p/r}\left(\gamma_{s}^\sharp\right)^{p/s}$),
there exists an idempotent $P$ with arbitrarily large gaps such
that
$$
\int_E |P|^p>c \int_{\TT} |P|^p.
$$
\end{proposition}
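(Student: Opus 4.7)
The plan is to adapt the constructions in the proofs of Propositions \ref{p:comparison} and \ref{with-holder}, replacing each concentrating factor $R(t)$ by the dilate $R(\nu t)$ for a prescribed large integer $\nu$, and choosing the peaking idempotent $T$ from Lemma \ref{l:peak-meas} with a sufficiently large gap parameter. The key observation, already flagged in the paragraph preceding the statement, is that dilation of the argument multiplies the spectrum by $\nu$, producing arbitrarily large gaps, while the coprimality freedom in Lemma \ref{l:grid} lets us steer the grid peak of $R(\nu\cdot)$ back to the rational $a/q$ coming from Diophantine approximation.

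Given $N$, fix $\nu:=N$. Choose $\theta,\eta>0$ small enough so that the estimates in the proofs of Propositions \ref{p:comparison} and \ref{with-holder} go through, but with $\theta$ further shrunk by a factor of order $1/\nu$ to absorb the extra $\nu$ that appears when applying Bernstein's inequality to $R(\nu t)$. Apply Lemma \ref{l:grid} to obtain an irreducible fraction $a/q$ with $(\nu,q)=1$ and $q$ as large as needed, and set $b\in\{1,\dots,q-1\}$ with $b\equiv\nu a\pmod q$. Since $\gcd(b,q)=1$, Remark \ref{not-prime} lets us choose $R\in\PP_q$ (resp.\ $R_1,R_2\in\PP_q$ in the H\"older case) approaching the relevant concentration level $\gamma^\sharp_{2p}$ (resp.\ $\gamma^\sharp_r,\gamma^\sharp_s$) with the peak at $b/q$ rather than $1/q$. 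Finally pick $T$ from Lemma \ref{l:peak-meas} with gap parameter $N_T$ large enough (depending on $\nu$, $q$, $N$) for the checks below. Our candidate is
\[
P(t):=R(\nu t)\,R\bigl(\nu(q+1)t\bigr)\,T(qt),
\]
with the obvious single-$R$ variant corresponding to Proposition \ref{p:comparison}.

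Two verifications remain. First, that $P$ is a genuine idempotent with gaps $\geq N$: its frequencies are $\nu h_1+\nu(q+1)h_2+qk$ with $h_1,h_2\in\mathrm{spec}(R)\subset[0,q)$ and $k\in\mathrm{spec}(T)$. For fixed $k$, distinct $(h_1,h_2)$ give differences that are nonzero multiples of $\nu=N$ (using $q+1>q-1\geq|h_1-h_1'|$), hence of magnitude $\geq N$. For distinct $k,k'$ the term $q(k-k')$ has magnitude $\geq qN_T$ and dominates the polynomial contribution $\nu(q-1)(q+2)$ as soon as $N_T>2\nu q$, so triples are distinct and the cross-gap is again $\geq N$ once $N_T$ is taken larger still. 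Second, the concentration estimate: because $(\nu,q)=1$, the map $k\mapsto \nu k\bmod q$ is a bijection on $\ZZ_q$, so $R(\nu\cdot)$ and $R(\cdot)$ take the same multiset of values on $\mathbb{G}_q$, and in particular $R(\nu a/q)=R(b/q)$ is the grid peak; the same holds for $R(\nu(q+1)\cdot)$ by $1$-periodicity since $\nu a\in\ZZ$. The Bernstein-type bounds \eqref{inequality-1}--\eqref{inequality-3} transfer to the dilate $R(\nu t)$ with an extra multiplicative factor $\nu$ on the length scale, absorbed by our choice of $\theta$; from here the computation in the proofs of Propositions \ref{p:comparison} and \ref{with-holder} goes through verbatim.

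The main technical obstacle is precisely this Bernstein-type transfer: $R(\nu t)$ has degree less than $\nu q$, which \emph{a priori} lies outside the range of Lemma \ref{l:Bernstein}. The saving fact is that on the grid $\mathbb{G}_q$ dilation by a $\nu$ coprime to $q$ acts by permutation, so grid sums are unchanged; the chain rule only inflates the off-grid oscillation estimate by a factor $\nu$, which is harmless because $\theta$ is a free parameter in Lemma \ref{l:grid} and may be taken of order $\varepsilon/N$.
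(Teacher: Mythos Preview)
Your argument is correct and follows the same route as the paper: dilate the concentrating factor by $\nu$, use the coprimality clause of Lemma~\ref{l:grid} to relocate the grid peak to $b/q$ with $\nu a\equiv b\pmod q$, and let the gap parameter of the peaking idempotent $T$ from Lemma~\ref{l:peak-meas} absorb the enlarged degree. Your explicit checks that $P$ is an idempotent with gaps $\ge N$, and your observation that Lemma~\ref{l:Bernstein} transfers to $R(\nu\,\cdot)$ via the permutation action of multiplication by $\nu$ on $\mathbb{G}_q$ (at the cost of a harmless factor $\nu$ in the oscillation scale), are exactly the adaptations the paper alludes to when it says ``the rest of the proof can be adapted''. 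One small slip: for the Proposition~\ref{p:comparison} case (your single-$R$ variant $P(t)=R(\nu t)T(qt)$) the relevant level is $\gamma_p^\sharp$, not $\gamma_{2p}^\sharp$; the exponent $2p$ only enters in the two-factor construction of Proposition~\ref{2pgivesp}.
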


\section{ Improvement of constants for $p$ not an even integer }

We proved in \cite{BR} that $\gamma_p=1$ for $p>1$ and $p$ not an
even integer. Let us give the main lines of the proof, which will
be used again for the improvement of the constant when $p=1$. As
we shall see, it has been slightly simplified compared to the
proof in \cite{BR}. The main ingredient is the fact that there are
idempotents that concentrate as the Dirichlet kernels, but with
arbitrarily large gaps, and at $1/2$ instead of $0$. We have
already stated this in Lemma \ref{l:peak-meas}.

\medskip

If we take such a peaking function $T$, then $T(qx)$ concentrates
around the points of the translated grid
\begin{equation}\label{def:gridhalf} \mathbb{G}_q^\star:=\frac 1{2q}
+  \mathbb{G}_q = \left\{\frac {2k+1}{2q}\ \ ;\ \ k=0, \cdots,
q-1\right\}.
\end{equation}
We have considerably gained with this new grid compared to
$\mathbb{G}_q$ because $0$ -- where, by positive definiteness, we
always must have a maximal value of any idempotent -- does not
belong to the grid any more, and thus we will even be able to find
idempotents $P$ such that the maximal value of $|P|$ (over the
grid) will be attained at the points $\pm 1/2q$, moreover, the sum
of the values $|P|^p$ on $\mathbb{G}_q^\star$ is just slightly
larger than $2|P(1/2q)|^p$.

Let us interpret the new constants that we will introduce in terms
of another concentration problem on a finite group. More
precisely, we view $\mathbb{G}_q^\star$ as
$\mathbb{G}_{2q}\setminus \mathbb{G}_q$, and identify
$\mathbb{G}_{2q}$ with $ \ZZ_{2q}$, while $ \mathbb{G}_{q}^*$
identifies with a coset. Recall that the idempotents on $
\ZZ_{2q}$ are identified with polynomials in $\PP_{2q}$. We are
interested in relative concentration inside the coset, and give
the following definition.

\begin{definition}\label{def:Gammap-star} We define
\begin{equation}
\Gamma_p^\star:= \sup_{K<\infty} \liminf_{q\rightarrow \infty}
\Gamma_p^\star(q,K), \end{equation} where $\Gamma_p^\star(q,K)$ is
the maximum of all constants $\gamma$ for which there exists $R\in
\PP_{2q}$ satisfying
\begin{eqnarray}
2\left|R\left(\frac 1{2q}\right) \right|^p&\geq & \gamma
\sum_{k=0}^{q-1} \left|R\left(\frac {2k+1}{2q}\right) \right|^p  \label{Gam-star}  \\
2\left|R\left(\frac 1{2q}\right) \right|^p&\geq & \gamma K^{-1}
\sum_{k=0}^{q-1}\left|R\left(\frac{k}{q}\right)\right|^p.
\label{cond-K}
\end{eqnarray}
\end{definition}

In other words, $\Gamma_p^\star$ is positive when there  is
uniform concentration at $1/2q$, (which is the case for $p>1$),
but the grids $\mathbb{G}_q$ and $\mathbb{G}_q^\star$ do not play
the same role; the constant $\Gamma_p^{\star}$ is only the
relative concentration on $\mathbb{G}_q^{\star}$, which we try to
maximize.

\begin{remark}\label{not-prime-bis} We can also replace $1$ by $2a+1$ in the left-hand side of
 \eqref {Gam-star} when $q$ is any integer, but $2a+1$ and $2q$
co-primes.
\end{remark}
This is the equivalent of Remark \ref{not-prime}. Multiplication
 by  $b$, such that $b(2a+1)\equiv 1$ modulo $2q$, will send $1$ to
$2a+1$ and define a bijection on $\mathbb{G}_q^\star$ (resp.
$\mathbb{G}_q$).
\medskip

Lower bounds for $\Gamma_{p}^\star $ are given in the lemma below,
which is a slight modification of Lemma 34 in \cite{BR}.
\begin{lemma}\label{l:majA}
For  $p>1$, we have the inequality
\begin{equation}\label{eq:majA}
\frac1{\Gamma^\star_p} \leq \inf_{0<t<1/2}A(p, t),
\end{equation}
 where, for $\lambda>1$,
\begin{equation}\label{eq:Alimit}
A(\lambda, t):=\frac 1 {\left(\sin(\pi t) \right)^{\lambda}}
\sum_{k=0}^{\infty}\left|\frac{\sin\left((2k+1)\pi t\right)}
{2k+1}\right|^{\lambda}.
\end{equation}
\end{lemma}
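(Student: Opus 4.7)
The plan is to use Dirichlet kernels $D_n\in\PP_{2q}$ as test polynomials, with $n$ and $q$ tending to infinity in such a way that $n/(2q)\to t$ for a fixed $t\in(0,1/2)$; this is parallel to the proof of Lemma~\ref{l:majB}, except that the relevant evaluation points now sit on the shifted grid $\mathbb{G}_q^\star$. Using the closed form $D_n(x)=e^{\pi i(n-1)x}\sin(\pi nx)/\sin(\pi x)$, I first get at the peak point
$$|D_n(1/(2q))|^p\sim \sin^p(\pi t)\,(2q/\pi)^p,$$
while for each fixed $k\ge 0$,
$$|D_n((2k+1)/(2q))|^p\sim (2q/\pi)^p\left|\frac{\sin((2k+1)\pi t)}{2k+1}\right|^p.$$

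The contributions from ``small $k$'' and ``large $k$'' to the sum $\sum_{k=0}^{q-1}|D_n((2k+1)/(2q))|^p$ are identified by the symmetry $|D_n(x)|=|D_n(1-x)|$, coming from $\sin(\pi n)=0$, which pairs $k$ with $q-1-k$. Combining this with dominated convergence (justified by the uniform bound $|D_n(x)|\le 1/(2\|x\|)$ valid for $\|x\|\le 1/2$, and the $\ell^p$-summability of $1/(2k+1)$ for $p>1$) yields
$$\sum_{k=0}^{q-1}|D_n((2k+1)/(2q))|^p\sim 2(2q/\pi)^p\sum_{k=0}^\infty\left|\frac{\sin((2k+1)\pi t)}{2k+1}\right|^p.$$
Taking the ratio, I get $2|D_n(1/(2q))|^p/\sum_{k=0}^{q-1}|D_n((2k+1)/(2q))|^p\to 1/A(p,t)$, which is precisely the bound asserted in \eqref{Gam-star}.

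To control \eqref{cond-K}, a completely analogous computation on the unshifted grid $\mathbb{G}_q$ (now using $n/q\to 2t$, together with $D_n(0)=n\sim 2qt$) shows that $\sum_{k=0}^{q-1}|D_n(k/q)|^p/|D_n(1/(2q))|^p$ converges to the finite constant
$$K_0(p,t):=\left(\frac{\pi t}{\sin\pi t}\right)^p+\frac{2^{1-p}}{\sin^p(\pi t)}\sum_{k=1}^\infty\left|\frac{\sin(2\pi kt)}{k}\right|^p.$$
Taking $K$ larger than, say, $2K_0(p,t)A(p,t)$ in the definition of $\Gamma_p^\star(q,K)$ ensures \eqref{cond-K} is satisfied for all sufficiently large $q$. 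Taking $\liminf_q$ then $\sup_K$ in the definition of $\Gamma_p^\star$, I obtain $\Gamma_p^\star\ge 1/A(p,t)$; since $t\in(0,1/2)$ is arbitrary, this gives $1/\Gamma_p^\star\le \inf_{0<t<1/2}A(p,t)$, as desired.

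The only genuinely delicate point is the exchange of the $q\to\infty$ limit with the infinite sum in the asymptotic analysis of the shifted-grid sum; the standard bound $|D_n(x)|\le\min(n,1/(2\|x\|))$ combined with $p>1$ makes this routine, and no new ingredient is needed beyond what already appears in the proof of Lemma~\ref{l:majB}.
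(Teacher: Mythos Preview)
Your proof is correct and follows essentially the same approach as the paper, which merely sketches ``The inequality is obtained by taking Dirichlet kernels $D_n$, with $n/2q$ tending to $t$'' and handles \eqref{cond-K} via ``a small modification of $B(\lambda,t)$''. You have filled in the details the paper omits---the symmetry pairing $k\leftrightarrow q-1-k$, the dominated-convergence justification, and the explicit limit $K_0(p,t)$ for the unshifted-grid ratio---and your $K_0(p,t)$ is precisely the ``small modification'' of $B$ the paper alludes to (with $t$ replaced by $2t$ and the $k=0$ term separated).
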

The inequality is obtained by taking  Dirichlet kernels $D_n$,
with $n/2q$ tending to $t$, a point that will be used later on.
Observe that $A(\lambda, t)$ tends to $\infty$ when $t$ tends to
$0$, so that the infimum is obtained away from $0$. The uniformity
in the second inequality \eqref{cond-K} is given by a bound of (a
small modification of) $B(\lambda, t)$ defined in \eqref{eq:Bdef},
for which we have the inequality
\begin{equation}\label{K-above}
B(\lambda, t)\leq \left (\frac \pi 2\right)^\lambda +2 \left
(\sum_k k^{-\lambda}\right)t^{-\lambda}.
\end{equation}
$$ $$

Observe that (for fixed $t$) $A(\lambda,t)$, and hence also
$\inf_{0<t<1/2} A(\lambda,t)$ are decreasing functions of
$\lambda$. In \cite{BR} recognizing the Fourier coefficients (at
$k$ and $-k$) of the function $\frac {\pi}2 \left
(\chi_{[-t/2,t/2]}(x)-\chi_{[-t/2,t/2]}(x-1/2)\right)$ we used
Plancherel Formula to calculate
\begin{equation}\label{exact}
A(2,t)=\frac {\pi^2 t}{4\sin^2(\pi t)}.
\end{equation}
Substituting $x=\pi t$ and recalling \eqref{case2} we find that
$$
\Gamma^\star_2\geq 2\gamma_2 \approx 0.9226.
$$
Moreover, it is easy to see that $ \inf_{0<t<1/2} A(\lambda,t)$ is
left continuous in $\lambda$ at $2$, so that
\begin{equation}\label{eq:c2below}
\liminf_{p\to 2-0} \Gamma_p^\star \geq 2\gamma_2.
\end{equation}

Our main estimate for $\Gamma_p^\star$ is the following.
\begin{proposition}\label{optimal}
For $p>2$ we have $\Gamma_p^\star = 1$.
\end{proposition}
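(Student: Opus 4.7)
The upper bound $\Gamma_p^\star\leq 1$ is immediate: the sum in \eqref{Gam-star} contains the symmetric pair $k=0$ and $k=q-1$, corresponding to $\pm 1/(2q)\bmod 1$, whose moduli both equal $|R(1/(2q))|$ since the coefficients of $R\in\PP_{2q}$ are real. Hence the sum is at least $2|R(1/(2q))|^p$.

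For the matching lower bound, the plan is to mimic the strategy that produced \eqref{bound-above} for $c_p^+$---take high powers of a suitably chosen Dirichlet kernel---and then to apply Proposition \ref{random} (whose hypothesis $p>2$ is precisely the reason our conclusion requires this range). Fix $t\in(0,1/2)$ with $\sin(\pi t)>1/3$, for instance $t=1/4$; then
\begin{equation*}
\sin(\pi t)>\sup_{k\geq 1}\left|\frac{\sin((2k+1)\pi t)}{2k+1}\right|,
\end{equation*}
the right-hand side being bounded by $1/3$. For each integer $L\geq 1$ and $n,q\to\infty$ with $n/(2q)\to t$, let $P\in\PP_{2q}^+$ be the polynomial of degree less than $2q$ that coincides with $D_n^L$ on the grid $\mathbb{G}_{2q}$. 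The asymptotic computation underlying Lemma \ref{l:majA}, applied with exponent $Lp$ rather than $p$, yields
\begin{equation*}
\frac{\sum_{k=0}^{q-1}|P((2k+1)/(2q))|^p}{2|P(1/(2q))|^p}\longrightarrow A(Lp,t)\qquad(q\to\infty),
\end{equation*}
while the analogous computation built on $B(Lp,2t)$ (cf.\ \eqref{K-above}) shows that $\sum_{k=0}^{q-1}|P(k/q)|^p/|P(1/(2q))|^p$ tends to a finite constant $K_0(L,t,p)$. By dominated convergence (each term $k\geq 1$ in the series defining $A(\lambda,t)$ decays geometrically in $\lambda$, while $A(p,t)<\infty$ supplies the dominator), $A(Lp,t)\to 1$ as $L\to\infty$.

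To promote $P\in\PP_{2q}^+$ to an idempotent $R\in\PP_{2q}$, we apply Proposition \ref{random} with $2q$ in the role of $q$. Hypothesis \eqref{concentr} is supplied by the concentration just established. For \eqref{cond-c}, observe that the nonnegative coefficients of $P$ are obtained by aliasing those of $D_n^L$ modulo $2q$, so $\sum a_h=P(0)=n^L$, while each coefficient is at most $O(Lt)\cdot n^{L-1}$; together with $|P(1/(2q))|/P(0)\to(\sin(\pi t)/(\pi t))^L>0$, this yields \eqref{cond-c} for some $c=c(L,t)>0$ uniform in $q$. The proposition then produces an idempotent $R\in\PP_{2q}$ with $|R(1/(2q))|\geq(1-\varepsilon)|P(1/(2q))|$ and $\bigl(\sum_{k=0}^{2q-1}|R(k/(2q))-P(k/(2q))|^p\bigr)^{1/p}\leq\varepsilon|P(1/(2q))|$. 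Applying Minkowski's inequality separately on $\mathbb{G}_q^\star$ and on $\mathbb{G}_q$ transfers the estimates from $P$ to $R$ with only a $(1+O(\varepsilon))$ multiplicative loss: \eqref{Gam-star} holds for $R$ with $\gamma$ close to $1/A(Lp,t)$, and \eqref{cond-K} holds with $K$ just above $K_0(L,t,p)$. Choosing $K$ accordingly, one obtains $\liminf_q\Gamma_p^\star(q,K)\geq(1-O(\varepsilon))/A(Lp,t)$; letting $L\to\infty$ and then $\varepsilon\to 0$ gives $\Gamma_p^\star\geq 1$.

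The main technical obstacle is the verification of \eqref{cond-c} uniformly in $q$: one must check that aliasing $D_n^L$ modulo $2q$ does not destroy the coefficient bounds, which reduces to elementary combinatorial estimates on convolution powers of an indicator function. All other steps are bookkeeping with two nested limits, first $q\to\infty$ (with $L$ and $t$ fixed) and then $L\to\infty$; the uniformity in $q$ of all constants depending on $L$ and $t$ is what makes this order of limits legitimate.
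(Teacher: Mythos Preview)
Your proposal is correct and follows essentially the same route as the paper: take the positive-definite polynomial $P\in\PP_{2q}^+$ that agrees with $D_n^L$ on $\mathbb G_{2q}$ (with $n/(2q)\to t$ and $t=1/4$ being the paper's choice), use the limit $A(Lp,t)\to 1$ to get near-optimal concentration on $\mathbb G_q^\star$, bound the $\mathbb G_q$-sum via \eqref{K-above}, and then invoke Proposition~\ref{random} (with $2q$ replacing $q$) and Minkowski to pass to an idempotent. Your version is somewhat more explicit than the paper's---you spell out the easy upper bound $\Gamma_p^\star\le 1$ from the symmetric pair $k=0,\,k=q-1$, and you flag the aliasing issue in verifying \eqref{cond-c}, which the paper glosses over with the coarser bound $|\hat P(k)|\le L\,n^{L-1}$---but there is no genuine methodological difference.
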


We postpone the proof of this proposition and show how to use it.
We need  an adaptation of the Khintchine 's type theorem that we
used in the last section. The next lemma uses the inhomogeneous
extension of Khintchine's Diophantine approximation theorem, first
proved by Sz\"usz \cite{Sz} and later generalized by Schmidt
\cite{Sch}. This is Proposition 37 of \cite{BR}.

\begin{lemma}\label{l:grid-half}Let $E$ be a measurable set
of positive measure in $\TT$. For all $\theta>0$, $\eta>0$ and
$q_0\in \mathbb{N}$, there exists an irreducible fraction
$(2k+1)/(2q)$ such that $q>q_0$ and
\begin{equation}\label{szusz}
\left|\left [\frac {2k+1}{2q}-\frac{\theta}{q^2},\frac {2k+1}{2q}
+\frac{\theta}{q^2}\right]\cap E\right|\geq (1-\eta)\frac{2\theta}
{q^2}.
\end{equation}
Moreover, given a positive integer $\nu$, it is possible to choose
$q$ such that $(\nu, q)=1$.
\end{lemma}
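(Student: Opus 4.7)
The plan is to proceed in close analogy with the proof of Lemma \ref{l:grid}, replacing the classical (homogeneous) Khintchine theorem by its inhomogeneous version due to Sz\"usz \cite{Sz} and Schmidt \cite{Sch}. Two ingredients drive the argument: the Lebesgue density theorem, which provides a point of density $1$ of $E$; and an inhomogeneous rational-approximation theorem applied with shift $\beta=1/2$ at this density point. The combination transfers the density statement from an arbitrary density point $x_0\in E$ to each of infinitely many small neighborhoods centered at half-integer-shifted fractions $(2k+1)/(2q)$.

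Concretely, I would first fix $x_0\in E$ a Lebesgue density point, so that given $\eta>0$ there is $r_0>0$ with
$$
|[x_0-r,x_0+r]\cap E|\geq (1-\eta/2)\,2r\qquad \text{for all }0<r<r_0.
$$
Next, by the Sz\"usz--Schmidt inhomogeneous Khintchine theorem applied to $(x_0,1/2)$, for almost every $x_0$ (and hence for our density point, which we may choose outside any prescribed null set) there exist infinitely many pairs $(k,q)$ with $q>q_0$ and
$$
\left|q x_0 - k - \tfrac12\right| < \frac{\theta}{q},
\qquad \text{i.e.}\qquad
\left|x_0-\frac{2k+1}{2q}\right|<\frac{\theta}{q^2}.
$$
After possibly replacing $(2k+1)/(2q)$ by its reduced form (which only sharpens the bound), set $\delta=\theta/q^2$ and $c=(2k+1)/(2q)$. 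For $q$ large enough one has $2\delta<r_0$, and since $|x_0-c|<\delta$,
$$
[c-\delta,c+\delta]\subset [x_0-2\delta,x_0+2\delta].
$$
Hence the ``bad'' set $[c-\delta,c+\delta]\setminus E$ is contained in $[x_0-2\delta,x_0+2\delta]\setminus E$, which has measure at most $(\eta/2)(4\delta)=2\eta\delta$, giving
$$
\left|[c-\delta,c+\delta]\cap E\right|\geq 2\delta-2\eta\delta=(1-\eta)\frac{2\theta}{q^2},
$$
as required.

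For the moreover clause, I would restrict the denominators in the Sz\"usz--Schmidt construction to the arithmetic progression $\{q:\gcd(q,\nu)=1\}$. The Borel--Cantelli type argument underlying the inhomogeneous Khintchine theorem continues to work provided the corresponding series still diverges, which is the case since $\sum_{(q,\nu)=1} 1/q=\infty$. Thus one can produce infinitely many approximants with $(\nu,q)=1$, and then apply the previous argument verbatim.

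The main obstacle, as expected for this kind of Diophantine-approximation lemma, is the joint handling of the inhomogeneous shift $1/2$ and the coprimality restriction $(\nu,q)=1$: one must verify that the Sz\"usz--Schmidt proof goes through when the denominators are restricted to an arithmetic progression. This is not difficult but requires a careful inspection of their measure-theoretic argument; everything else (the Lebesgue density step and the transfer from density at $x_0$ to density at $(2k+1)/(2q)$) is standard and parallel to Lemma \ref{l:grid}.
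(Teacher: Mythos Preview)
The paper does not actually prove this lemma: immediately before the statement it says ``The next lemma uses the inhomogeneous extension of Khintchine's Diophantine approximation theorem, first proved by Sz\"usz \cite{Sz} and later generalized by Schmidt \cite{Sch}. This is Proposition 37 of \cite{BR}.'' So there is no in-paper proof to compare against; the argument is deferred to \cite{BR}, and the paper only records that the key input is the inhomogeneous Khintchine theorem.

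Your outline is exactly the approach the paper points to, and it is correct in substance: pick a Lebesgue density point $x_0$ of $E$ lying in the full-measure set on which the Sz\"usz--Schmidt theorem (with inhomogeneous shift $\beta=1/2$) applies, obtain infinitely many $q$ with $|x_0-(2k+1)/(2q)|<\theta/q^2$, and transfer the density at $x_0$ to the interval centered at $(2k+1)/(2q)$ via the inclusion $[c-\delta,c+\delta]\subset[x_0-2\delta,x_0+2\delta]$. Two small points deserve one extra line each. First, for the reduction to an irreducible fraction you should also take $x_0$ irrational (another null set removed): then the reduced denominators $q'\mid q$ cannot stay bounded, since otherwise the approximants would take only finitely many rational values while converging to $x_0$; this secures $q'>q_0$ after reduction, and the inequality \eqref{szusz} with $\theta/q'^2$ follows because $|x_0-c|<\theta/q^2\le\theta/q'^2$. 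Second, in the ``moreover'' clause note that reduction preserves the coprimality constraint automatically, since $q'\mid q$ and $(q,\nu)=1$ force $(q',\nu)=1$; together with your observation that $\sum_{(q,\nu)=1}1/q=\infty$ keeps the Sz\"usz--Schmidt divergence hypothesis intact, this closes the argument.
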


Our main result is the following.

\begin{theorem}\label{Star}
For $p$ not an even integer, one has the inequalities
$\gamma_p\geq \Gamma_{p}^\star $ and $\gamma_p\geq
\left(\Gamma_{r}^\star\right)^{p/r}\left(\Gamma_{s}^\star\right)^{p/s}$
for all $r>p$ and $s>p$ such that $\frac pr+\frac ps=1$. Moreover,
given a symmetric measurable set $E$ of positive measure, and any
constant $c<\Gamma_p^\star$ (resp.
$\left(\Gamma_{r}^\star\right)^{p/r}\left(\Gamma_{s}^\star\right)^{p/s}$),
there exists an idempotent $P$ with arbitrarily large gaps such
that
$$
\int_E |P|^p>c \int_{\TT} |P|^p.
$$
\end{theorem}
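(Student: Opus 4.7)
The proof will parallel Propositions \ref{p:comparison} and \ref{with-holder}, with the grid $\mathbb{G}_q$ replaced by the shifted grid $\mathbb{G}_q^\star$ and the Dirichlet kernel replaced by the peaking idempotent $T$ of Lemma \ref{l:peak-meas} at $\alpha=1/2$ (this is precisely where the hypothesis that $p$ is not an even integer enters). Given $E$ symmetric of positive measure and $\e>0$, I would first fix $\theta,\eta$ according to Lemma \ref{l:peak-meas}, then apply Lemma \ref{l:grid-half} to produce an irreducible fraction $(2a+1)/(2q)$ with $q$ large (and, if desired, coprime to a prescribed integer $\nu$ for later gap enlargement) such that the interval $I:=[(2a+1)/(2q)-\theta/q^2,(2a+1)/(2q)+\theta/q^2]$ satisfies $|I\cap E|\geq(1-\eta)\cdot 2\theta/q^2$. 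The definition of $\Gamma_p^\star$ combined with Remark \ref{not-prime-bis} (multiplication by $b$ with $b(2a+1)\equiv 1\pmod{2q}$) then produces $R\in\PP_{2q}$ and $K>0$ satisfying $2|R((2a+1)/(2q))|^p\geq(\Gamma_p^\star-\e)\sum_{k=0}^{q-1}|R((2k+1)/(2q))|^p$ and $2|R((2a+1)/(2q))|^p\geq(\Gamma_p^\star-\e)K^{-1}\sum_{k=0}^{q-1}|R(k/q)|^p$.

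For the first inequality $\gamma_p\geq\Gamma_p^\star$, I would apply Lemma \ref{l:peak-meas} to the rescaled set $F:=(qI\cap qE)\pmod 1\subset[1/2-\theta/q,1/2+\theta/q]$ (which has relative measure at least $1-\eta$) to obtain an idempotent $T$ with gaps larger than $N$ (arbitrary) such that $\int_F|T|^p\geq(1-\e)\int_\TT|T|^p$. The frequencies of $T(qt)$ are spaced by at least $qN$, so for $N\geq 3$ they exceed the degree $2q$ of $R$, and $Q(t):=R(t)T(qt)$ is an idempotent, with arbitrarily large gaps inherited from $T$. For the lower bound on $\int_E|Q|^p$, Bernstein (Lemma \ref{l:Bernstein} on the grid $\mathbb{G}_{2q}$) gives $|R(t)|^p\geq(1-\e)|R((2a+1)/(2q))|^p$ on $I$, and the peaking of $T$ on $F$ supplies the rest. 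For the upper bound, I would change variables to one period $[-1/(2q),1/(2q)]$ of $T(qt)$ and split the $s$-integration into $|s|<\theta/q^2$ (tight Bernstein regime, where $\sum_k|R(s+(2k+1)/(2q))|^p$ is close to $2c^{-1}|R((2a+1)/(2q))|^p$ by \eqref{Gam-star} combined with Bernstein) and $|s|\geq\theta/q^2$ (tail regime, where the crude Marcinkiewicz--Zygmund bound on the full $\mathbb{G}_{2q}$-sum yields at most $2K_pc^{-1}(1+K)|R((2a+1)/(2q))|^p$ using \emph{both} \eqref{Gam-star} and \eqref{cond-K}, weighted by the negligible tail mass of $T$). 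Assembling these bounds gives $\int_E|Q|^p\geq(\Gamma_p^\star-O(\e))\int_\TT|Q|^p$.

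For the Hölder bound $\gamma_p\geq(\Gamma_r^\star)^{p/r}(\Gamma_s^\star)^{p/s}$, I would replace $R$ by the product polynomial $S(t):=R_1(t)R_2((2q+1)t)$, where $R_i\in\PP_{2q}$ realizes star-concentration near level $\Gamma_r^\star$ (resp.\ $\Gamma_s^\star$) at $(2a+1)/(2q)$ for the exponent $r$ (resp.\ $s$). Because $2q+1\equiv 1\pmod{2q}$, we have $R_2((2q+1)k/(2q))=R_2(k/(2q))$ for every integer $k$, so on $\mathbb{G}_{2q}^\star$ the values $|S|^p$ factor as $|R_1|^p|R_2|^p$; the discrete Hölder inequality with conjugate exponents $r/p$ and $s/p$ (conjugate since $p/r+p/s=1$) then yields a star-concentration inequality for $S$ at level $(\Gamma_r^\star)^{p/r}(\Gamma_s^\star)^{p/s}$, and an analogous $K$-type bound on the $\mathbb{G}_q$-sum. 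The frequencies of $S$ are the distinct sums $h_1+h_2(2q+1)$ with $0\leq h_i<2q$, so $S$ is an idempotent of degree less than $4q^2$; then $S\cdot T(q\cdot)$ remains an idempotent for $N\geq 4q$. The Bernstein step on intervals of length $\theta/q^2$ now produces a multiplicative factor $1+O(\theta)$ rather than $1+O(\theta/q)$ (as already noted in Section 4), but this is harmless for $\theta$ small. The remainder of the argument is unchanged and delivers the second inequality with arbitrarily large gaps.

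The principal technical obstacle is the simultaneous bookkeeping of the two grid-sums along $\mathbb{G}_q$ and $\mathbb{G}_q^\star$ in the tail of the upper estimate; this is exactly what the auxiliary constant $K$ in \eqref{cond-K} was designed to absorb. Once it is in place, the proof reduces to a careful but essentially routine merging of the template of Proposition \ref{p:comparison} with the star-peaking Lemma \ref{l:peak-meas}, followed by the product-polynomial extension for the Hölder variant.
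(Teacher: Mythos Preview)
Your outline follows the paper's proof essentially step for step: same shifted grid $\mathbb{G}_q^\star$, same peaking idempotent $T$ at $1/2$ via Lemma~\ref{l:peak-meas}, same idempotent $Q=R\cdot T(q\,\cdot)$, same Bernstein/Marcinkiewicz--Zygmund splitting of the upper bound (with the auxiliary constant $K$ from \eqref{cond-K} absorbing the $\mathbb{G}_q$-sum in the tail), and the same product $S(t)=R_1(t)R_2((2q+1)t)$ together with H\"older for the second inequality. All of this is correct.

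There is one genuine slip. You write that $Q(t)=R(t)T(qt)$ is ``an idempotent, with arbitrarily large gaps inherited from $T$''. This is false: the spectrum of $Q$ is $\{h+qm:h\in\operatorname{spec}R,\ m\in\operatorname{spec}T\}$, and within each block $qm+\operatorname{spec}R$ the spacing is that of $R$, which can be $1$. The large gaps of $T(q\,\cdot)$ only guarantee that the blocks are disjoint (so that $Q$ is an idempotent), not that $Q$ itself has large gaps. The same objection applies to your Hölder variant with $S$.

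The fix is exactly the $\nu$-dilation you allude to in your first paragraph but never actually carry out. One replaces $R$ (resp.\ $R_1,R_2$) by $R(\nu\,\cdot)$ (resp.\ $R_1(\nu\,\cdot),\,R_2((2q+1)\nu\,\cdot)$) for a large odd $\nu$ with $(\nu,q)=1$; all frequencies of the $R$-factor are then multiples of $\nu$, so the product with $T(q\,\cdot)$ has gaps $\geq\nu$. Since $\nu(2a+1)\equiv 2b+1\pmod{2q}$ for some $b$, one must choose the concentrating polynomial $R$ for the point $(2b+1)/(2q)$ rather than $(2a+1)/(2q)$, via Remark~\ref{not-prime-bis}; multiplication by $\nu$ permutes both $\mathbb{G}_q$ and $\mathbb{G}_q^\star$, so the grid sums are unchanged. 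This is how the paper handles the ``arbitrarily large gaps'' clause, and your proof needs the same modification.
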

\begin{proof}[Proof]
We shall first  prove the inequality $\gamma_p\geq
\Gamma_{p}^\star $. We will then show how to modify the proof for
the other statements.

We are given a symmetric measurable set $E$. We consider the grid
$ \mathbb{G}_q^\star = \mathbb{G}_{2q}\setminus \mathbb{G}_q$
contained in the torus, with $a$ and $q$ given by Lemma
\ref{l:grid-half}. At this point we have already fixed some
$\e>0$. The values of $q_0$, $\eta$ and $\theta$ are also fixed,
but we will say how to choose them later on. We assume that $q$ is
sufficiently large so that we can find $R\in \PP_{2q}$ with the
property that
\begin{equation}\label{given-star}
  2|R(\frac 1{2q}+\frac aq)|^p\geq  c
\sum_{k=0}^{q-1}|R(\frac 1{2q}+\frac kq)|^p,
\end{equation}
 with $c>(1-\e)\Gamma_p^\star$. Moreover we can assume that
\begin{equation}\label{grid-star}
\sum_{k=0}^{q-1}|R(\frac kq)|^p\leq 2Kc^{-1}|R(\frac 1{2q}+\frac
aq)|^p
\end{equation}
for some uniform constant $K$. The existence of such an $R$ is
given by Definition  \ref{def:Gammap-star} and by the remark just
after. Once chosen $R$, we choose a peaking function $T$ at $1/2$
for the value $\e$. We assume now that $\eta$ has been chosen
sufficiently small for the existence of such a function $T$, built
for $\delta:=\theta/q^2$, which is possible if $\theta
q_0^{-2}\leq \delta_0$.

We choose the idempotent $Q(t):=R(t)T(qt)$ (indeed it is an
idempotent if $T$ has sufficiently large gaps) and fix  $I:=
\left(\frac {2a+1}{2q}-\frac{\theta}{q^2},\frac {2a+1}{2q}
+\frac{\theta}{q^2}\right)$. We also put
$\tau^p:=\int_{\TT}|T|^p$. From this point on, the proof follows
the same lines as the proof of Proposition \ref{p:comparison}. We
have the inequality
\begin{align*}
\frac 12 \int_E |Q|^p \geq \int_{I\cap E} |R|^p|T|^p & \geq (1-
\varepsilon)|R((2a+1)/(2q))|^p \int_{I\cap E} |T(qt)|^pdt \\
& \geq \frac{1}{q}(1-\varepsilon)|R((2a+1)/(2q))|^p
~~ \int_{F\cap (-\delta, +\delta)} |T|^p \\
& \geq \frac{(1-\varepsilon)^2\tau^p}{q}|R((2a+1)/(2q))|^p.
\end{align*}
We have used that the pre-image $F$ of $I\cap E$ by $t\mapsto qt$
has measure at least $2(1-\eta)\delta$, and concentrates the
integral of $|T|^p$ at $1/2$. We have also used the inequality,
\begin{equation}\label{inequ-1}
|R(t)|^p\geq (1-\e)|R(\frac {2a+1}{2q})|^p,
\end{equation}
valid for $|t-\frac {2a+1}{2q}|<\frac \theta {q}$ with $\theta$
small enough. This is an easy consequence of Lemma
\ref{l:peak-meas} for polynomials of degree $2q$, since the sum of
values of $|R|^p$ on the whole grid $\mathbb{G}_{2q}$ is bounded
by $2c^{-1}(K+1) $ times its value at $(2a+1)/(2q)$. Just take
$\theta$ small enough (we fix $\theta$ in such a way that this is
valid).

Before going on, let us remark that the other two basic
inequalities can be deduced from Lemma \ref{l:peak-meas}. First,
for $|t-\frac {2a+1}{2q}|<\frac \theta {q}$ with $\theta$ small
enough, we have also
\begin{equation}\label{inequ-2}
\sum_{k=0}^{q-1}|R(t+\frac {k}{q})|^p \leq 2 c^{-1}(1+\e)|R(\frac
{2a+1}{2q})|^p.
\end{equation}
Finally, for all $t$, we have, for some constant $\kappa$,
\begin{equation}\label{inequ-3} \sum_{k=0}^{q-1}|R(t+\frac kq)|^p
\leq \kappa|R(\frac {2a+1}{2q})|^p.
\end{equation}
Here we can take $\kappa:=2c^{-1} K_p(K+1)$. Next we  look for a
bound of the whole integral
$$\int_\TT |Q|^p = \int_{0}^{1/q}\left(\sum_k|R(t+\frac
kq)|^p\right)|T(qt)|^p dt$$ and cut the integral into two parts,
depending on the fact that $|t-\frac 1{2q}|\leq \frac{\theta}{q}$
or not. For the first part we use \eqref{inequ-2},  for the second
one \eqref{inequ-3}. We recall that the integral of $T$ outside
the interval $(\frac 12-\frac \theta q, \frac 12+\frac \theta q)$
is bounded by $\e \tau^p$.
\begin{eqnarray*}
 \int_\TT |Q|^p
& \leq & 2c^{-1}\,\frac{1+\varepsilon}{q}|R(\frac {2a+1}{2q})|^p
~~ \tau^p
+\kappa\frac{\varepsilon}{q} |R(\frac {2a+1}{2q})|^p \tau^p \\
& \leq & 2c^{-1}\,\frac{(1+C\varepsilon)\tau^p}{q}|R(\frac
{2a+1}{2q})|^p.
\end{eqnarray*}
We conclude by comparison with the integral on $E$. This allows to
conclude for the first case, $\gamma_p\geq \Gamma_{p}^\star $.
\medskip

Let us now indicate the necessary modification for finding
$\gamma_p\geq \left(\Gamma_{r}^\star\right)^{p/r}
\left(\Gamma_{s}^\star\right)^{p/s}$. In the following we denote
$r_1:=r$ and $r_2:=s$: the index $j$ will always cover the two
values $j=1$ and $j=2$. Instead of starting from one polynomial,
we start from two polynomials $R_1$ and $R_2$ in $ \PP_{2q}$,
which satisfy the following inequalities, for $j=1,2$.
\begin{equation}\label{given-star2}
2|R_j(\frac {2a+1}{2q})|^{r_j}\geq  c_j \sum_{k=0}^{q-1}|R_j(\frac
{2a+1}{2q})|^{r_j},
\end{equation}
with $c_j>(1-\e)\Gamma_{r_j}^\star$. Moreover we assume that
\begin{equation}\label{grid-star2}
\sum_{k=0}^{q-1}|R_j(\frac kq)|^{r_j}\leq
2Kc^{-1}|R_j(\frac{2a+1}{2q})|^{r_j}
\end{equation}
for some uniform constant $K$. We then put
$R(t):=R_1(t)R_2((2q+1)t)$. We remark that, on $\mathbb G_{2q}$,
the values of $R$ coincide with the values of the product $R_1
R_2$. We will prove that we still have inequalities
\eqref{inequ-1} and \eqref{inequ-2} for $|t-\frac
{2a+1}{2q}|<\frac \theta {q^2}$, and \eqref{inequ-3} for all $t$.
Let us first prove that \eqref{inequ-3} holds for some constant
$\kappa$. Indeed, by H\"older Inequality with conjugate exponents
$r_1/p$ and $r_2/p$ and periodicity of $R_2$, we have
$$
\sum_{k=0}^{q-1}|R(t+\frac kq)|^p\leq
\left(\sum_{k=0}^{q-1}|R_1(t+\frac kq)|^{r_1}\right)^{\frac
p{r_1}} \times \left(\sum_{k=0}^{q-1}|R_2((2q+1)t+\frac
kq)|^{r_2}\right)^{\frac p{r_2}}.
$$
Both factors are bounded, up to a constant, respectively by
$|R_1(\frac{2a+1}{2q})|^p$ and $|R_2(\frac{2a+1}{2q})|^p$, which
allows to conclude.

 In view of \eqref{inequ-1} and \eqref{inequ-2}, we remark that,
 when $t$ differs from $\frac {2a+1}{2q}$ by less than $\frac
\theta {q^2}$, then $(2q+1)t$ differs from $\frac {2a+1}{2q}$
(modulo $1$) by less than $\frac {3\theta} {q}$. So we still have,
for  $|t-\frac {2a+1}{2q}|<\frac \theta {q^2}$ with $\theta$ small
enough,
 \begin{equation}\label{inequ-4}
|R(t)|^p\geq (1-\e)|R(\frac {2a+1}{2q})|^p.
\end{equation}
For Inequality \eqref{inequ-2}, we first use H\"older Inequality
with conjugate exponents $r_1/p$ and $r_2/p$ as before, then the
same kind of estimate for each factor.

From this point, the proof is the same.

It remains to indicate how to modify the proof to get peaking
idempotents with arbitrarily large gaps.  So we fix $\nu$ as a
large odd integer, and we will prove that we can replace  the
polynomial $R$ used above  by some
$$
S(x):=R_1(\nu x) R_2((2q+1)\nu x),
$$ which has gaps larger than $\nu$. Recall first
that we can take arbitrarily large $q$ satisfying $(\nu,q)=1$, and
get an idempotent by multiplication by $T(qx)$ for $T$ having
sufficiently large gaps. The value taken by the polynomial $S$ at
$\frac{2a+1}{2q}$ is the value of $R_1R_2$ at $\frac{2b+1}{2q}$,
with $\nu (2a+1)\equiv 2b+1 $ mod $2q$. So we choose $R_1$ and
$R_2$ as before, but with $b$ in place of $a$.

From this point the proof is identical, apart from an additional
factor $\nu$, which modifies the value of $\theta$. We know that
$S(\nu x)$ and $R(x)$take globally the same values on both grids
$\GG$ and $\GS$, because in each case we multiply by an odd
integer that is coprime with $2q$.
\end{proof}

Now Theorem \ref{th:L1} is an easy consequence of Proposition
\ref{optimal} and Theorem \ref{Star}: take $r<2$ and $s>2$, so
that $\gamma_1\geq 1 \cdot (\Gamma_{r}^\star)^{1/r}$, and take the
limit of $\Gamma_r$ for $r\to 2-0$ using \eqref{eq:c2below}.

\bigskip

\begin{proof}[Proof of Proposition \ref{optimal}]
The proof is in the same spirit as the proof of the inequality
$\gamma_p^\sharp > 0.483$. Let us first fix $c<1$ and prove that
we can find a positive definite polynomial of degree less than
$2q$ such that
 $$2|P(\frac 1{2q}+\frac aq)|^p\geq  c
\sum_{k=0}^{q-1}|P(\frac 1{2q}+\frac kq)|^p,$$ while $$2|P(\frac
1{2q}+\frac aq)|^p\geq  c \sum_{k=0}^{q-1}|P(\frac kq)|^p.$$
Indeed, it is proved in \cite{BR} (and elementary) that $A(Lp,
1/4)$ has limit $1/2$ when $L$ tends to $\infty$, which means that
we can take for $P$ a polynomial that coincides with $D_n^L$ on
the grid $\mathbb{G}_{2q}$. We fix $L$  large enough, and choose
$n$ to be approximately $q/4$.  The second inequality follows from
\eqref{K-above}.

At this point one can use Proposition \ref{random}, with $q$
replaced by $2q$, to find the idempotent $Q$.

\end{proof}

\end{document}